\documentclass{amsart}
\usepackage[utf8]{inputenc}
\usepackage[english]{babel}
\usepackage[T1]{fontenc}

\usepackage{amsmath}
\usepackage{amsfonts}
\usepackage{amssymb}
\usepackage{amsthm} 
\usepackage{amsaddr}
\usepackage{hyperref} 
\usepackage{stmaryrd} 
\usepackage{yhmath} 
\usepackage{tikz}
\usepackage{subcaption}
\usepackage{accents} 
\usepackage{xcolor} 
\usepackage{dsfont} 

\usepackage{blindtext}
\usepackage{enumitem} 

\usepackage[left=2cm,right=2cm,top=2cm,bottom=2cm]{geometry}
\setlength{\parindent}{0cm}
\usepackage{verbatim}

\DeclareMathOperator{\R}{\mathbb{R}}

\DeclareMathOperator{\E}{\mathbb{E}}
\DeclareMathOperator{\1}{\mathds{1}}

\DeclareMathOperator{\supp}{\mathrm{supp}}



\DeclareFontFamily{U}{mathx}{}
\DeclareFontShape{U}{mathx}{m}{n}{<-> mathx10}{}
\DeclareSymbolFont{mathx}{U}{mathx}{m}{n}
\DeclareMathAccent{\widehat}{0}{mathx}{"70}
\DeclareMathAccent{\widecheck}{0}{mathx}{"71}

\usepackage[backend=biber,style=numeric]{biblatex}
\usepackage{csquotes}
\addbibresource{biblio.bib}


\numberwithin{equation}{section}

\makeatletter
\def\blfootnote{\xdef\@thefnmark{}\@footnotetext}
\makeatother


\theoremstyle{plain}
\newtheorem{theorem}{Theorem}[section]
\newtheorem{remark}[theorem]{Remark}
\newtheorem{lemma}[theorem]{Lemma}
\newtheorem{proposition}[theorem]{Proposition}
\newtheorem{corollary}[theorem]{Corollary}
\newtheorem{hypothesis}[theorem]{Hypothesis}

\title[Modified scattering for Vlasov-Maxwell with small data]{Modified scattering for small data solutions to the Vlasov-Maxwell system: a short proof}
\author{Emile Breton}
\address{Univ Rennes, CNRS, IRMAR - UMR 6625, F-35000 Rennes, France \\ Email address : emile.breton@univ-rennes.fr}

\begin{document}


\begin{abstract}
    We prove that for any global solution to the Vlasov-Maxwell system arising from compactly supported data, and such that the electromagnetic field decays fast enough, the distribution function exhibits a modified scattering dynamic. In particular, our result applies to every small data solution constructed by Glassey-Strauss in \cite{Glassey_Strauss_1987}.
\end{abstract}

\keywords{Relativistic Vlasov-Maxwell system, asymptotic properties, modified scattering, small data solutions, compactly supported data}

\subjclass[2020]{Primary : 35Q83, 35B40}

\maketitle

\blfootnote{This work was conducted within  the France 2030 program, Centre Henri Lebesgue ANR-11-LABX-0020-01.}


\section{Introduction and main results}

\subsection{General context}

The relativistic Vlasov-Maxwell system models a collisionless plasma, it can be written as 
\begin{equation}
    \tag{RVM}
    \label{equation_RVM1}
    \sqrt{m_\alpha^2+|v|^2} \partial_t f_\alpha+v\cdot\nabla_x f_\alpha +e_\alpha\Big(\sqrt{m_\alpha^2+|v|^2} E+v\times B\Big)\cdot\nabla_v f_\alpha=0,\qquad 1\leq \alpha \leq N,
\end{equation}
\begin{align*}
    \partial_tE=\nabla\times B -4\pi j,&\quad \nabla\cdot E=4\pi\rho,\\
    \partial_t B=-\nabla\times E,&\quad  \nabla\cdot B=0,
\end{align*}
where $\rho,j$ are the total charge and current density of the plasma defined by 
\begin{equation*}
    \rho=\sum_{1\leq\alpha\leq N} e_\alpha\int_{\R^3_v} f_\alpha \mathrm{d}v,\quad j=\sum_{1\leq\alpha\leq N}e_\alpha\int_{\R^3_v}\widehat{v_\alpha}f_\alpha\mathrm{d}v.
\end{equation*}
Here we consider the multi-species case $N\geq 2$, where $f_\alpha(t,x,v)$ is the density function of a species $\alpha$ with mass $m_\alpha>0$ and charge $e_\alpha\neq 0$. Here, $t\geq 0$ denotes the time, $x\in\R^3_v$ the particle position and $v\in$ the particle momentum. Moreover, $(E,B)(t,x)$ denotes the electromagnetic field of the plasma. For a particle of mass $1$ and momentum $v\in\R^3_v$, we will denote its energy by $v^0:=\langle v\rangle=\sqrt{1+|v|^2}$ and its relativistic speed as 
    \begin{equation}
        \widehat{v}:=\frac{v}{v^0},\quad v\in\R_v^3.
    \end{equation}
In addition, we denote $v_\alpha(v)=\frac{v}{m_\alpha}$. We will simply write $v_\alpha$ since there is no risk of confusion. Then  
    \begin{equation}
        \widehat{v_\alpha}=\frac{v}{\sqrt{m_\alpha^2+|v|^2}}=\frac{v}{v^0_\alpha},\qquad v^0_\alpha:=\sqrt{m_\alpha^2+|v|^2}.
    \end{equation}
Note that we have $v_\alpha^0=m_\alpha\langle v_\alpha\rangle$. Finally, the initial data $f_{\alpha0}=f_\alpha(0,\cdot)$ and $(E,B)(0,\cdot)=(E_0,B_0)$ also satisfy, in the electrically neutral setting, the constraint equations 
\begin{equation}
    \label{equation_condition_condition_initiale}
    \nabla\cdot E_0=4\pi\sum_{1\leq\alpha\leq N} e_\alpha\int_{\R^3_v}  f_{\alpha0} \mathrm{d}v,\qquad \nabla\cdot B_0=0,\qquad \sum_{1\leq\alpha\leq N} e_\alpha \int_{\R^3_x\times\R^3_v}f_{\alpha0} \mathrm{d}v\mathrm{d}x=0.
\end{equation}

In 3D the global existence problem for the classical solutions to \eqref{equation_RVM1} is still open, though various continuation criteria have been proved (see, for instance, \cite{Luk_Strain_14}).

The case of small data solutions was first studied by Glassey, Strauss, and Schaeffer \cite{Glassey_Strauss_1987,Glassey_Schaeffer_88}. They showed that the solutions to \eqref{equation_RVM1} arising from small and compactly supported data are global in time. The compact support assumption on the momentum variable $v$ was later removed by Schaeffer \cite{Schaeffer_04}.
More recently, without any compact support hypothesis, \cite{Wei_Yang_21,Bigorgne_20} established propagation of regularity for the small data solution to \eqref{equation_RVM1} and \cite{Wang_22} relaxed the smallness assumption on the electromagnetic field. Finally, a modified scattering dynamic was derived for the distribution function ; see \cite{Bigorgne_2023,Pankavich_BenArtzi_2024}, along with a scattering map \cite{bigorgne_2023_scatteringmap}.

Similar results have been obtained for the Vlasov-Poisson equation, for instance modified scattering has been proved for small data \cite{Choi_Kwon_16,Ionescu_Pausader_22,Pankavich2022,Flynn_Ouyang_Pausader_Widmayer_2023} (see also \cite{bigorgne_velozo_2024,Schlue_Taylor_2024} for more refinements). It was also shown that, in the single species case and for a non-trivial distribution function, linear scattering cannot occur \cite{Choi_Ha_11}. Note that modified scattering also holds in the context of stability of a point charge \cite{Pausader_Widmayer_2021,Pausader_Widmayer_Yang_2024}, or with the external potential $-\frac{|x|^2}{2}$ \cite{Bigorgne_Velozo_2025}.\\
In this paper, we provide a short proof of modified scattering for the distribution functions $f_\alpha$. Compared with Pankavich and Ben-Artzi \cite{Pankavich_BenArtzi_2024}, who also worked on solutions constructed by Glassey and Strauss in \cite{Glassey_Strauss_1987}, our approach does not require to assume more regularity on the data than in \cite{Glassey_Strauss_1987}.

\subsection{Main result}

We assume the following properties and derive the results in this context. 
\begin{hypothesis}
\label{hypothese_suffisante}
Assume $(f_\alpha,E,B)$ is a $C^1$ global solution to \eqref{equation_RVM1} with initial data $(f_{\alpha0},E_0,B_0)$ and satisfying the following properties.
    \begin{itemize}
        \item There exists $k>0$ such that $f_{\alpha0}$ are non-negative $C^1$ functions with support in $\{(x,v)\,|\,|x|\leq k,\,|v|\leq k\}$. Moreover, $E_0,B_0$ are $C^1$ with support in $\{x\,|\,|x|\leq k\}$ and satisfy the constraint \eqref{equation_condition_condition_initiale}.
        \item There exists $C_0>0$ such that, for all $(t,x)\in\R_+\times\R^3_x$,
        \begin{align}
            \label{equation_estimee_EB_Glassey}
            |(E,B)(t,x)|&\leq\frac{C_0}{(t+|x|+2k)(t-|x|+2k)},\\
            \label{equation_estimee_gradEB_Glassey}
            |\nabla_x(E,B)(t,x)|&\leq\frac{C_0\log(t+|x|+2k)}{(t+|x|+2k)(t-|x|+2k)^2}.
        \end{align}
    \end{itemize}
\end{hypothesis}
\begin{remark}
    In the following, we will write $a\lesssim b$ when there exists $C>0$, independent of $t$ and depending on $(x,v)$ only through $k$, such that $a\leq Cb$. However, here $C$ will usually depend on $(m_\alpha,e_\alpha)_{1\leq \alpha\leq N}$ and the initial data.
\end{remark}
\begin{remark}
    Since $(f_\alpha,E,B)$ is a solution to \eqref{equation_RVM1} with compactly supported data, it follows that 
    \begin{equation*}
        \supp (E,B)(t,\cdot)\subset \{x\in\R^3_x\,|\, |x|\leq t+k\}.
    \end{equation*}
    Hence the right hand sides of \eqref{equation_estimee_EB_Glassey} and \eqref{equation_estimee_gradEB_Glassey} are always bounded.
    
\end{remark}
\begin{remark}
    It is important to note that, according to \cite[Theorem 1]{Glassey_Strauss_1987} and \cite{Glassey_Schaeffer_88}, for small compactly supported data or nearly neutral data, the unique associated classical solution to \eqref{equation_RVM1} satisfies Hypothesis \ref{hypothese_suffisante}. This proves that, when the data are compactly supported, Theorem \ref{theoreme_principal_scattering_modifie} holds for small or nearly neutral data. Finally, our result applies to a subclass of the solutions constructed by Rein \cite{Rein_90} as well, those arising from compactly supported data.
\end{remark}
We now state that for any $(f_\alpha,E,B)$ verifying the above hypothesis, the distribution functions $f_\alpha$ satisfy a modified scattering dynamic. Moreover, we are able to prove that the asymptotic limits $f_{\alpha\infty}$ are compactly supported.

\begin{theorem}
    \label{theoreme_principal_scattering_modifie}
    Let $(f_{\alpha},E,B)$ be a solution of \eqref{equation_RVM1} satisfying Hypothesis \ref{hypothese_suffisante}. Then, every $f_\alpha$ verifies modified scattering. More precisely there exists $\widetilde{f}_{\alpha\infty}\in C_c^0(\R^3_x\times\R^3_v)$ and $\E,\mathbb{B}\in C^0(\R^3_v,\R^3_v)$ such that, for any $\alpha$ and all $t\geq 1,\, (x,v)\in\R^3_x\times\R^3_v$ verifying $\displaystyle tv^0_\alpha-\log(t)\frac{e_\alpha}{v^0_\alpha}\E(v_\alpha)\cdot \widehat{v_\alpha}\geq 0$,
    \begin{equation*}
        \left|f_\alpha\left(tv^0_\alpha-\log(t)\frac{e_\alpha}{v^0_\alpha}\E(v_\alpha)\cdot\widehat{v_\alpha},x+tv-\log(t)\frac{e_\alpha}{v^0_\alpha}\big(\E(v_\alpha)+\widehat{v_\alpha}\times \mathbb{B}(v_\alpha)\big),v\right)-\widetilde{f}_{\alpha\infty}(x,v)\right|\lesssim \frac{\log^6(2+t)}{2+t}.
    \end{equation*}
\end{theorem}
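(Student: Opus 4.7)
The strategy is to analyse the characteristic curves of \eqref{equation_RVM1},
\begin{equation*}
    \dot X = \widehat{v_\alpha}(V),\qquad \dot V = e_\alpha\bigl(E(s,X) + \widehat{v_\alpha}(V)\times B(s,X)\bigr),
\end{equation*}
along which each $f_\alpha$ is constant. For fixed $(x,v)$ and $t$ large, I would let $T_0$ and $Y_0$ denote the time and space arguments appearing on the left-hand side of the inequality in Theorem \ref{theoreme_principal_scattering_modifie}, and define $(X_\ast(s), V_\ast(s))_{s\in[0,T_0]}$ as the backward characteristic with terminal data $X_\ast(T_0)=Y_0$, $V_\ast(T_0)=v$. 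Since $f_\alpha(T_0, Y_0, v) = f_{\alpha 0}(X_\ast(0), V_\ast(0))$, the problem reduces to showing that $(X_\ast(0), V_\ast(0))$ converges as $t\to\infty$ to a continuous limit $(X_\infty^\ast(x,v), V_\infty^\ast(x,v))$ at the stated rate; the profile is then defined by $\widetilde{f}_{\alpha\infty}(x,v) := f_{\alpha 0}(X_\infty^\ast(x,v), V_\infty^\ast(x,v))$, whose compact support follows from that of $f_{\alpha 0}$ together with the fact that $(x,v) \mapsto (X_\infty^\ast, V_\infty^\ast)$ is a uniformly bounded perturbation of a translation.

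The first step is a bootstrap argument: assuming $V_\ast$ stays in a ball of radius slightly larger than $k$ around the origin, one uses $|\widehat{v_\alpha}(V_\ast)| \leq 1 - \delta$ for some $\delta > 0$ together with the form of $Y_0$ to obtain $s - |X_\ast(s)| \gtrsim s$ uniformly on $[0, T_0]$. Via \eqref{equation_estimee_EB_Glassey} this gives $|(E, B)(s, X_\ast(s))| \lesssim (1+s)^{-2}$, so that integrating $\dot V_\ast$ backwards yields $|V_\ast(s) - v| \lesssim (1+s)^{-1}$ uniformly in $T_0$, closing the bootstrap and providing $|X_\ast(s) - (Y_0 - (T_0 - s)\widehat{v_\alpha}(v))| \lesssim \log(2+s)$. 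The heart of the argument is then the extraction of the logarithmic correction. Starting from
\begin{equation*}
    X_\ast(0) = Y_0 - T_0\widehat{v_\alpha}(v) - \int_0^{T_0}\bigl(\widehat{v_\alpha}(V_\ast(\sigma)) - \widehat{v_\alpha}(v)\bigr) \mathrm{d}\sigma,
\end{equation*}
I would linearise $\widehat{v_\alpha}$ at $v$, expand $V_\ast(\sigma) - v$ via the analogous Duhamel formula, and use Fubini to rewrite the resulting double integral as $-e_\alpha \int_0^{T_0}\tau \bigl(E + \widehat{v_\alpha}(v)\times B\bigr)(\tau, x + \tau \widehat{v_\alpha}(v)) \mathrm{d}\tau$ plus controlled remainders. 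The $(1+\tau)^{-2}$ bound on the integrand makes this integral grow like $\chi(v)\log(T_0) + O(1)$, with $\chi(v)$ given by the leading $\tau^{-2}$ coefficient of the field along the null line in direction $\widehat{v_\alpha}(v)$. The continuous functions $\E(v_\alpha)$ and $\mathbb{B}(v_\alpha)$ are then defined, essentially as the electric and magnetic parts of $\chi(v)$, so that the logarithmic corrections built into $T_0$ and $Y_0$ exactly cancel this divergent term.

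The main obstacle I anticipate is twofold. First, the fact that the leading coefficient depends only on $v_\alpha$, and not on $x$, requires extracting the asymptotic behavior of $(E, B)$ along null directions rather than just their upper bound. The gradient bound \eqref{equation_estimee_gradEB_Glassey} is what ensures that translating the base point of the null trajectory by a bounded $x$ contributes only to subleading corrections, so that $\chi(v)$ is genuinely $x$-independent; however, the existence of $\chi(v)$ itself relies on the Vlasov--Maxwell structure to identify a leading-order radiation pattern. Second, each approximation step --- replacing $V_\ast$ by $v$ in $\widehat{v_\alpha}$, replacing $X_\ast$ by its straight-line approximation inside $E$ and $B$, linearising $\widehat{v_\alpha}$, and pushing the upper bound $T_0$ to $\infty$ --- produces an error of the form $\log^j(t)/t$, and these compound when the velocity expansion is fed back into the position integral. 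A patient iteration along these lines, using in particular \eqref{equation_estimee_gradEB_Glassey} to Taylor-expand the field off the straight-line trajectory, is what should produce the rate $\log^6(2+t)/(2+t)$ stated in the theorem.
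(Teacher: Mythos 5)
Your skeleton (go to backward characteristics, show the initial point converges, define $\widetilde{f}_{\alpha\infty}$ as $f_{\alpha0}$ composed with the limit map) is sound, and it is in fact close in spirit to what the paper does: the paper works with $g^\alpha(t,x,v)=f^\alpha(t,x+t\widehat{v},v)$ and its modified trajectories, and the trajectory analysis you sketch is essentially the argument used there to prove the compact support statement (Proposition \ref{proposition_h_support_compact}). The bootstrap giving $|V_\ast(s)-v|\lesssim (1+s)^{-1}$ from \eqref{equation_estimee_EB_Glassey} and the support information is also fine.

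The genuine gap is the step you yourself flag and then skip: the existence, $x$-independence and continuity of the leading coefficient $\chi(v)$, i.e.\ of $\E(v_\alpha)$ and $\mathbb{B}(v_\alpha)$. Hypothesis \ref{hypothese_suffisante} only provides the upper bounds \eqref{equation_estimee_EB_Glassey}--\eqref{equation_estimee_gradEB_Glassey}; these give $\left|\tau\,(E+\widehat{v_\alpha}(v)\times B)(\tau,x+\tau\widehat{v_\alpha}(v))\right|\lesssim (1+\tau)^{-1}$, hence at most logarithmic growth of your integral, but they do not show that $\tau^2 E(\tau,x+\tau\widehat{v_\alpha}(v))$ converges, nor identify its limit, nor give the integrable error needed to conclude that the integral equals $\chi(v)\log T_0+O(1)$ and that the remainder is $O(\log^6(2+t)/(2+t))$. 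Establishing this is the actual content of the paper: one first proves $|\nabla_x g^\alpha|\lesssim 1$, $|\nabla_v g^\alpha|\lesssim\log^2(2+t)$ by a Gr\"onwall argument using \eqref{equation_estimee_gradEB_Glassey}; then shows that the spatial average $\int g^\alpha\,\mathrm{d}x$ converges to an asymptotic charge $Q^\alpha_\infty$ at rate $\log^5(2+t)/(2+t)$ (Proposition \ref{corollaire_estimee_Q_infini}); then converts this into an asymptotic expansion of the velocity averages $t^3\int h f_\alpha\,\mathrm{d}v$ (Proposition \ref{proposition_lien_f_et_Q_infini}); and finally feeds this into the Glassey--Strauss representation $E=E_T+E_S+E_{data}$ (Proposition \ref{proposition_decomp_glassey_strauss}), proving improved decay $\lesssim t^{-3}$ for $E_{data}$ and $E_S$ (the latter requires refining the original Glassey--Strauss bound using the support of $f_\alpha$) and extracting the self-similar limit of $E_T$, which is what defines $\E$ and $\mathbb{B}$ and yields $|t^2E(t,x+t\widehat{v})-\E(v)|\lesssim \log^6(2+t)/(2+t)$ on the relevant region. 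Without some version of this chain, your $\chi(v)$ is not known to exist, the logarithmic corrections in $T_0$ and $Y_0$ cannot be defined, and the stated rate cannot be obtained; so as written the proposal assumes the theorem's key analytic input rather than proving it.
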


\begin{remark}
    Unlike \cite{Bigorgne_2023,Pankavich_BenArtzi_2024} we modify the characteristics of the operator $v^0_\alpha\partial_t +v\cdot\nabla_x$ instead of $\partial_t +\widehat{v_\alpha}\cdot\nabla_x$. This is more consistent with the Lorentz invariance of \eqref{equation_RVM1} that we will exploit in a forthcoming article \cite{breton_non_completeness_2025} (see Remark \ref{remarque_prochain_article}).\footnote{To observe the Lorentz invariance of the Vlasov-Maxwell system, one has to write the Vlasov equation as in \eqref{equation_RVM1} rather than in \eqref{equation_RVM_simplifiee_f_alpha} below.} However, taking 
    \begin{equation*}
        \widetilde{\mathcal{C}}_v:=\widehat{v}\big(\E(v)\cdot\widehat{v}\big)-\big(\E(v)+\widehat{v}\times \mathbb{B}(v)\big),
    \end{equation*}
    we obtain from Theorem \ref{theoreme_principal_scattering_modifie} the same statement as \cite{Bigorgne_2023,Pankavich_BenArtzi_2024}
    \begin{equation*}
        \left|f_\alpha\left(t,x+t\widehat{v_\alpha}+\log(t)\frac{e_\alpha}{v^0_\alpha}\widetilde{\mathcal{C}}_{v_\alpha},v\right)-f_{\alpha\infty}(x,v)\right|\lesssim \frac{\log^6(2+t)}{2+t},\qquad f_{\alpha\infty}(x,v)=\widetilde{f}_{\alpha\infty}\left(x+\frac{e_\alpha}{v^0_\alpha}\log(v^0_\alpha)\widetilde{\mathcal{C}}_{v_\alpha},v\right).
    \end{equation*}
    In fact, these two formulations can be derived from each other and it will be more convenient to prove the latter one.
\end{remark}
\begin{remark}
    \label{remarque_prochain_article}
    In a forthcoming article \cite{breton_non_completeness_2025}, we will show that linear scattering, that is $g_\alpha(t,\cdot)\xrightarrow[t\rightarrow\infty]{L^1_{x,v}}g_{\alpha\infty}$, is a non-generic phenomenon. More precisely, the subset of the initial data leading to linear scattering constitutes a codimension 1 submanifold.
\end{remark}


\subsection{Ideas of the proof}
We detail here the arguments used to prove Theorem \ref{theoreme_principal_scattering_modifie}. Let $(f_\alpha,E,B)$ be a solution to \eqref{equation_RVM1} that satisfies Hypothesis \ref{hypothese_suffisante}. For the sake of presentation, here we assume that $m_\alpha=1$ so that $v=v_\alpha$. 
We begin by composing $f_\alpha$ by the linear flow to consider $g_\alpha(t,x,v)=f_\alpha(t,x+t\widehat{v},v)$. We then derive two key properties 
    $$\supp g_\alpha(t,\cdot)\subset\{(x,v)\,|\,|x|\lesssim \log(2+t),\,|v|\leq \beta\},$$
    $$|\nabla_x g_\alpha(t,\cdot)|\lesssim 1,\quad |\nabla_v g_\alpha(t,\cdot)|\lesssim \log^2(2+t).$$

Moreover, the spatial support of $f_\alpha(t,\cdot)$ is included in $\{|x|\leq \gamma t +k\}$, where $\gamma<1$. Consequently, the Lorentz force $L(t,x,v):=E(t,x)+\widehat{v}\times B(t,x)$ satisfies, on the support of $f_\alpha$,
\begin{equation}
    |L(t,x,v)|\lesssim (2+t)^{-2}.
\end{equation}
The first idea is to look for linear scattering, in which case we would have that $g_\alpha(t,\cdot)$ converges as $t\rightarrow \infty$. For this matter, we compute 
\begin{align*}
    \partial_t g_\alpha(t,x,v)=&-e_\alpha(E(t,x+t\widehat{v})+\widehat{v}\times B(t,x+t\widehat{v}))\cdot (\nabla_v f_\alpha)(t,x+t\widehat{v},v)\\
    =&\,e_\alpha \frac{t}{v^0}\Big[L(t,x+t\widehat{v},v)-\left(L(t,x+t\widehat{v},v)\cdot \widehat{v}\right)\widehat{v}\Big]\cdot\nabla_x g_\alpha(t,x,v) + O(\log^2(2+t)(2+t)^{-2}).
\end{align*}

With our estimates, we can merely control the first term by $t^{-1}$, preventing us from concluding that $g_\alpha(t,\cdot)$ converges as $t\rightarrow+\infty$. Note that this is consistent with Remark \ref{remarque_prochain_article}. However, by further investigating the asymptotic behavior of $(E,B)$, we can still expect to prove that $f_\alpha$ converges along modifications of the linear characteristics. To achieve this, we are led to determine the leading order contribution of the source terms $\rho$ and $j$ in the Maxwell equations. 
For the linearized system, the asymptotic behavior of $\rho$ and $j$ is governed by the spatial average $\int f_\alpha\mathrm{d}x$, which, in this setting, is a conserved quantity. To this end,  we begin by proving the existence of the asymptotic charge $Q^\alpha_\infty$ such that $\int f_\alpha\mathrm{d}x$ converges to $Q_\infty^\alpha$.
Now, let $Q_\infty(v)=\sum_\alpha e_\alpha Q^\alpha_{\infty}(v)$. This allows us to consider the asymptotic charge and the asymptotic current densities
\begin{equation}
    \rho^{as}(t,x):=\frac{1}{t^3}\left[\langle \cdot\rangle^5Q_\infty\right]\Big(\widecheck{x/t}\Big)\1_{|x|<t},\quad j^{as}(t,x):=\frac{x}{t^4}\left[\langle \cdot\rangle^5Q_\infty\right]\Big(\widecheck{x/t}\Big)\1_{|x|<t}, 
\end{equation}
where $u\mapsto\widecheck{u}$ is the inverse of the relativistic speed $u\mapsto \widehat{u}$. These densities satisfy 
\begin{equation*}
    |\rho(t,x)-\rho^{as}(t,x)|+|j(t,x)-j^{as}(t,x)|\lesssim \frac{\log^6(2+t)}{2+t}.
\end{equation*}

The previous arguments allow us to define $\E,\mathbb{B}\in C^0(\R^3_v,\R^3_v)$, which verify
\begin{equation}
    E(t,x+t\widehat{v})=\frac{1}{t^2}\E(v)+ O\left(\frac{\log^6(2+t)}{(2+t)^3}\right),\qquad  B(t,x+t\widehat{v})=\frac{1}{t^2}\mathbb{B}(v)+ O\left(\frac{\log^6(2+t)}{(2+t)^3}\right).
\end{equation}

\begin{remark}
    Although it is not immediately apparent in Sections \ref{sous-section_estimee_champs1}--\ref{sous-section_estimee_champs2}, it turns out that $t^{-2}\E\left(\widecheck{\frac{x}{t}}\right)$ and $t^{-2}\mathbb{B}\left(\widecheck{\frac{x}{t}}\right)$ can be interpreted as follows. Let $E^{as}$ and $B^{as}$ be the solutions to
    \begin{equation*}
        \square E^{as}=-\nabla_x \rho^{as}-\partial_t j^{as},\quad \square B^{as}=\nabla_x \times j^{as},
    \end{equation*}
    with trivial data at $t_0>0$. Then for $t\geq T(t_0)$ large enough, we have for all $|x|\leq \gamma t +k$
    \begin{equation*}
        E^{as}(t,x)=t^{-2}\E\left(\widecheck{\frac{x}{t}}\right),\quad B^{as}(t,x)=t^{-2}\mathbb{B}\left(\widecheck{\frac{x}{t}}\right).
    \end{equation*}
    Moreover, for $t$ large enough, we have $t^{-2}\E\left(\widecheck{\frac{x}{t}}\right)=E_T^{as}$ in the Glassey-Strauss decomposition of the electromagnetic field $(E^{as},B^{as})$, associated to the singular distribution function $f_\alpha^{as}(t,x,v)=\delta(x-t\widehat{v})Q_\infty^\alpha(v)$ through Proposition \ref{proposition_decomp_glassey_strauss}. We refer to \cite[Section 5]{bigorgne_2023_scatteringmap} for more information.
\end{remark}
To conclude, it remains to prove the modified scattering statement for the density functions $f_\alpha$. Using the above estimates for the fields, we derive 
\begin{equation}
    \partial_t\left(f_\alpha\left(t,x+t\widehat{v},v\right)\right)=e_\alpha\left[-\left(\frac{\mathbb{L}(v)}{tv^0}\cdot \widehat{v}\right)\widehat{v}+\frac{\mathbb{L}(v)}{tv^0}\right]\cdot\nabla_x f(t,x+t\widehat{v},v) + O\left(\frac{\log^6(2+t)}{(2+t)^2}\right),
\end{equation}
where $\mathbb{L}(v)=\E(v)+\widehat{v}\times\mathbb{B}(v)$. We finally introduce the correction 
    \begin{equation}
        \widetilde{\mathcal{C}}_v:=\widehat{v}\big(\E(v)\cdot\widehat{v}\big)-\big(\E(v)+\widehat{v}\times \mathbb{B}(v)\big),
    \end{equation}
which, once multiplied by $\frac{1}{v^0}\log(t)$, measures how much the characteristics of the Vlasov operator deviate from the linear ones. It allows us to obtain the modified scattering statement and that the asymptotic state $f_{\alpha\infty}$ is compactly supported.\\

\subsection{Structure of the paper}

Section \ref{section_preliminary_results} contains several statements that are needed in our proof of Theorem \ref{theoreme_principal_scattering_modifie}. We first introduce the function $g_\alpha$ by composing $f_\alpha$ with the linear flow. Then we compute the support of $f_\alpha$ and two key properties on $g_\alpha$ and its first order derivatives. We end this section by studying the asymptotic properties of $\int_{\R^3_x} f_\alpha \mathrm{d}x$ and $\int_{\R^3_v} f_\alpha \mathrm{d}v$. Finally, in section \ref{section_modified_scattering}, we further investigate the asymptotic behavior of $(E,B)$ and show that the densities $f_\alpha$ exhibit a modified scattering dynamic. Then, we prove the compactness of the support of $\widetilde{f}_{\alpha\infty}$ (see Proposition \ref{proposition_h_support_compact}), concluding the proof of Theorem \ref{theoreme_principal_scattering_modifie}.


\section{Preliminary results}
In the following sections, we consider $(f_\alpha,E,B)$ a solution to \eqref{equation_RVM1} that satisfies Hypothesis \ref{hypothese_suffisante}.
\label{section_preliminary_results}
We begin this section by giving a lemma about the inverse of $v\mapsto\widehat{v}$.
\begin{lemma}
    \label{lemme_jacobien_hat_v}
    We define on $\{u\in\R^3,\, |u|<1\}$ the map ~$\widecheck{}$~ by 
    \begin{equation}
        u\mapsto \widecheck{u}:=\frac{u}{1-|u|^2}.
    \end{equation}
    In particular 
    \begin{equation*}
        \forall |u|<1,\quad \forall v\in\R^3_v,\quad \widehat{\widecheck{u}}=u,\quad \widecheck{\widehat{v}}=v.
    \end{equation*}
    Finally, the Jacobian determinant of $v\in\R^3\mapsto \widehat{v}$ is $\langle v\rangle^{-5}$.
\end{lemma}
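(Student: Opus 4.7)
The plan is a direct algebraic verification in three short steps, all of which exploit that both $\widehat{\,\cdot\,}$ and $\widecheck{\,\cdot\,}$ are radial (each sends a ray through the origin into itself). First I look for the scalar factor: writing $\widecheck{u} = \mu u$ and imposing $\widehat{\widecheck{u}} = u$ forces $\mu / \langle \mu u\rangle = 1$, i.e.\ $\mu^2(1-|u|^2) = 1$, so $\mu = (1-|u|^2)^{-1/2}$. This fixes the explicit formula for $\widecheck{\,\cdot\,}$ and makes $\widehat{\widecheck{u}} = u$ immediate by substitution. The reverse identity $\widecheck{\widehat{v}} = v$ then follows from the one-line computation $1 - |\widehat{v}|^2 = 1 - |v|^2/\langle v\rangle^2 = \langle v\rangle^{-2}$, which gives $\widecheck{\widehat{v}} = \widehat{v}/\sqrt{1-|\widehat{v}|^2} = \widehat{v}\cdot\langle v\rangle = v$.

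For the Jacobian, I compute the partial derivatives
\[
\partial_j \widehat{v}_i \;=\; \frac{\delta_{ij}}{\langle v\rangle} \;-\; \frac{v_i v_j}{\langle v\rangle^3},
\qquad\text{so}\qquad
D\widehat{v}(v) \;=\; \frac{1}{\langle v\rangle}\,I \;-\; \frac{1}{\langle v\rangle^3}\,v v^{T}.
\]
Rather than expanding a full $3\times 3$ determinant, I would diagonalize $D\widehat{v}(v)$ using the orthogonal decomposition $\R^3 = \R v \oplus v^\perp$. On $v^\perp$ the rank-one correction vanishes, so $D\widehat{v}(v)$ acts as multiplication by $1/\langle v\rangle$, contributing a factor $\langle v\rangle^{-2}$ to the determinant. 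On $\R v$ it acts as multiplication by $1/\langle v\rangle - |v|^2/\langle v\rangle^3 = 1/\langle v\rangle^3$. Multiplying the three eigenvalues yields $\det D\widehat{v}(v) = \langle v\rangle^{-5}$, as claimed.

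There is no substantive obstacle here: every step is a routine identity. The only points worth emphasizing are the radial ansatz, which bypasses inverting a vector equation component by component, and the rank-one structure of $D\widehat{v}(v)$, which reduces the determinant computation to reading off eigenvalues. Equivalently, the matrix determinant lemma $\det(aI + w w^T) = a^3(1 + w^T w/a)$ applied with $a = 1/\langle v\rangle$ and $w = -v/\langle v\rangle^{3/2}$ gives the same answer in a single line.
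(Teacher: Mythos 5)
Your argument is correct, and in fact the paper states this lemma without any proof at all, treating it as elementary; your radial ansatz plus the rank-one eigenvalue computation is exactly the natural way to verify it. Two remarks. First, the formula your derivation produces, $\widecheck{u}=u\,(1-|u|^2)^{-1/2}$, is not the one displayed in the lemma, which reads $\widecheck{u}=u/(1-|u|^2)$; yours is the right one. The displayed definition is a typo (missing square root): with $u=\widehat{v}$ one has $1-|\widehat{v}|^2=\langle v\rangle^{-2}$, so $u/(1-|u|^2)=v\langle v\rangle\neq v$, whereas $u/\sqrt{1-|u|^2}=v$, and the rest of the paper uses $\widecheck{\ }$ consistently with your formula (e.g.\ the identity $\langle v\rangle^{2}\bigl(1-|x/t|^{2}\bigr)=1$ for $v=\widecheck{x/t}$ in the estimate of $I_2$). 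It would be worth saying explicitly that you are correcting the displayed expression rather than silently replacing it. Second, your closing aside misapplies the matrix determinant lemma: the perturbation is $-\,vv^{T}/\langle v\rangle^{3}$, which is negative semidefinite and cannot be written as $+\,ww^{T}$ for real $w$ (your choice $w=-v/\langle v\rangle^{3/2}$ flips the sign and would yield $\langle v\rangle^{-3}\bigl(1+|v|^2/\langle v\rangle^{2}\bigr)$). The correct one-line variant is $\det(A+uw^{T})=\det A\,\bigl(1+w^{T}A^{-1}u\bigr)$ with $A=\langle v\rangle^{-1}I$, $u=-v/\langle v\rangle^{3}$, $w=v$, which gives $\langle v\rangle^{-3}\bigl(1-|v|^{2}/\langle v\rangle^{2}\bigr)=\langle v\rangle^{-5}$. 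Since your main eigenvalue argument already yields the Jacobian, this only affects the optional remark, not the proof.
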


Let us begin by introducing, to simplify the notations, 
\begin{equation}
    f^\alpha(t,x,v)=f_\alpha(t,x,m_\alpha v).
\end{equation}
Notice here that the support of ${f^\alpha}_0$ is now included in $\{(x,v)\,|\, |x|\leq k, |v|\leq k_\alpha\}$ with $k_\alpha=\frac{k}{m_\alpha}$. Moreover, $f^\alpha$ satisfies the following equation 
\begin{equation}
    \label{equation_RVM_simplifiee_f_alpha}
    \partial_t f^\alpha+\widehat{v}\cdot\nabla_x f^\alpha +\frac{e_\alpha}{m_\alpha}(E+\widehat{v}\times B)\cdot\nabla_v f^\alpha=0.
\end{equation}
From this we can introduce 
\begin{equation*}
    g^\alpha(t,x,v)=f^\alpha(t,x+t\widehat{v},v),\quad g_\alpha(t,x,v)=f_\alpha(t,x+t\widehat{v_\alpha},v).
\end{equation*}
Notice that this notation is consistent since $g^\alpha(t,x,v)=g_\alpha(t,x,m_\alpha v)$. Moreover, the derivatives of $g^\alpha$ can be expressed as the following 
\begin{align}
    \nabla_x g^\alpha(t,x,v)&=(\nabla_x f^\alpha)(t,x+t\widehat{v},v),\label{equation_nabla_x_g^alpha}\\
    \nabla_v g^\alpha(t,x,v)&= \frac{t}{v^0}\Big[(\nabla_x f^\alpha)(t,x+t\widehat{v},v)-\widehat{v}\big((\nabla_x f^\alpha)(t,x+t\widehat{v},v)\cdot\widehat{v}\big)\Big]+(\nabla_v f^\alpha)(t,x+t\widehat{v},v).\label{equation_nabla_v_g^alpha}
\end{align}
This means that for $L(t,x,v)=E(t,x)+\widehat{v}\times B(t,x)$, $g^\alpha$ will satisfy the following equation 
\begin{equation}
    \label{equation_satisfaite_par_g^alpha}
    \partial_t g^\alpha(t,x,v) +\frac{t}{v^0}\frac{e_\alpha}{m_\alpha}\Big[\widehat{v}(L(t,x+t\widehat{v},v)\cdot\widehat{v})-L(t,x+t\widehat{v},v)\Big]\cdot\nabla_x g^\alpha(t,x,v)+\frac{e_\alpha}{m_\alpha}L(t,x+t\widehat{v},v)\cdot\nabla_v g^\alpha(t,x,v)=0.
\end{equation}
\subsection{Characteristics}
We begin by introducing the characteristics of $g^\alpha$. Let $\mathcal{X}(s)=\mathcal{X}(s,t,x,v),\mathcal{V}(s)=\mathcal{V}(s,t,x,v)$ be defined by the ODE

\begin{equation}
    \label{ODE_characteristics_g}
    \left\{\begin{array}{ll}
        \dot{\mathcal{X}}(s)&=\dfrac{s}{\mathcal{V}^0(s)}\dfrac{e_\alpha}{m_\alpha}\left[\widehat{\mathcal{V}}(s)\left(L(s,\mathcal{X}(s)+s\widehat{\mathcal{V}}(s),\mathcal{V}(s))\cdot\widehat{\mathcal{V}}(s)\right)-L(s,\mathcal{X}(s)+s\widehat{\mathcal{V}}(s),\mathcal{V}(s))\right],\vspace{5px}\\
        \dot{\mathcal{V}}(s)&=\dfrac{e_\alpha}{m_\alpha}L(s,\mathcal{X}(s)+s\widehat{\mathcal{V}}(s),\mathcal{V}(s))
    \end{array}\right.
\end{equation}
\begin{remark}
    One can easily switch between the characteristics of $f^\alpha$ and $g^\alpha$. In fact, taking $X(s)=\mathcal{X}(s)+s\widehat{\mathcal{V}}(s),\, V(s)=\mathcal{V}(s)$, we derive the following ODE 
    \begin{align*}
        \dot{X}(s)&=\widehat{V}(s),\\
        \dot{V}(s)&=\frac{e_\alpha}{m_\alpha} L(s,X(s),V(s)).
    \end{align*}
    Meaning that $(X,V)$ are the characteristics of $f^\alpha$ starting from $X(t)=x+t\widehat{v},V(t)=v$.
\end{remark}
We will now show that the support of $f^\alpha(t,x,\cdot)$ is bounded, uniformly in $(t,x)$. From this we derive that the space support of $f(t,\cdot)$ is bounded by $\widehat{\beta_\alpha}t+k$. Here, contrary to \cite{Glassey_Strauss_1987}, we do not require smallness of the initial data to prove the result.
\begin{lemma}
    \label{lemme_support_f^alpha_en_x}
    There exists a constant $\beta>0$ such that, for all $t\geq 0$ and any $\alpha$
    \begin{equation}
        \supp(f^\alpha(t,\cdot))\subset \{|x|\leq\widehat{\beta_\alpha}t+k,|v|\leq \beta_\alpha\},
    \end{equation}
    where $\beta_\alpha:=\frac{\beta}{m_\alpha}$. Moreover, if $g^\alpha(t,x,v)\neq 0$ then $\forall s\geq 0$
    \begin{equation*}
        s-\left|\mathcal{X}(s)+s\widehat{\mathcal{V}}(s)\right|+2k\geq k +s(1-\widehat{\beta}_{max}),
    \end{equation*}
    where $\beta_{max}:=\sup_{1\leq\alpha\leq N} \beta_\alpha$. In particular, for $(x,v)$ in the support of $g^\alpha(t,\cdot)$, we have
    \begin{equation*}
        t-|x+t\widehat{v}|+2k\geq t(1-\widehat{\beta}_{max}).
    \end{equation*}
\end{lemma}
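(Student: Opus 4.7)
The plan is to run a continuity (bootstrap) argument along the characteristics of $f^\alpha$. Given any $(t,x,v) \in \supp f^\alpha(t,\cdot)$, consider the backward characteristic $(X(s),V(s))$ of $f^\alpha$ with $X(t)=x$, $V(t)=v$, satisfying $\dot{X}=\widehat{V}$ and $\dot{V}=(e_\alpha/m_\alpha)(E+\widehat{V}\times B)(s,X(s))$. Conservation of $f^\alpha$ along characteristics forces $(X(0),V(0)) \in \supp f^\alpha_0$, so $|X(0)| \leq k$ and $|V(0)| \leq k_\alpha = k/m_\alpha$.

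Next, I would fix $\beta>0$ (to be determined) and run a bootstrap on $|V|$. If $|V(\tau)| \leq \beta_\alpha$ for $\tau \in [0,s]$, then $|\widehat{V}(\tau)| \leq \widehat{\beta_\alpha} < 1$, hence
\[
|X(\tau)| \leq |X(0)| + \int_0^\tau |\widehat{V}| \leq k + \widehat{\beta_\alpha}\,\tau,\qquad \tau - |X(\tau)| + 2k \geq k + (1-\widehat{\beta_\alpha})\,\tau.
\]
Plugging this into \eqref{equation_estimee_EB_Glassey} and using $|L(\tau,X,V)| \leq |E|+|B|$ gives
\[
|L(\tau, X(\tau), V(\tau))| \leq \frac{2C_0}{(\tau+2k)\big(k+(1-\widehat{\beta_\alpha})\tau\big)},
\]
which is integrable on $\R_+$. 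Integrating $\dot V$ then yields
\[
|V(s)| \leq k_\alpha + \frac{|e_\alpha|}{m_\alpha}\,2C_0\, I(\widehat{\beta_\alpha}),\qquad I(\widehat{\beta_\alpha}) := \int_0^\infty \frac{\mathrm{d}\tau}{(\tau+2k)\big(k+(1-\widehat{\beta_\alpha})\tau\big)}.
\]

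The main (and essentially only) point to check is that $\beta$ can be chosen so that the bootstrap closes uniformly in $\alpha$. An elementary computation shows $I(\widehat{\beta_\alpha}) \lesssim \log\!\big(1/(1-\widehat{\beta_\alpha})\big) \lesssim \log(1+\beta_\alpha)$ as $\beta_\alpha \to \infty$, whereas the left-hand side of the target bootstrap inequality $\beta_\alpha > k_\alpha + (|e_\alpha|/m_\alpha)\,2C_0\,I(\widehat{\beta_\alpha})$ is linear in $\beta_\alpha$. Hence, taking $\beta$ sufficiently large in terms of $k$, $C_0$, and the finitely many $(m_\alpha,e_\alpha)$, the inequality holds strictly for every $\alpha$; a standard continuity argument then upgrades the a priori assumption to $|V(s)| \leq \beta_\alpha$ for all $s \geq 0$, and consequently $|X(s)| \leq k + \widehat{\beta_\alpha} s$. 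Taking $s=t$ gives the desired support bound $\supp f^\alpha(t,\cdot) \subset \{|x| \leq \widehat{\beta_\alpha} t + k,\ |v|\leq\beta_\alpha\}$.

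Finally, the statements for $g^\alpha$ are immediate consequences of the correspondence in the remark preceding the lemma: if $g^\alpha(t,x,v)\neq 0$ then $f^\alpha(t,x+t\widehat{v},v)\neq 0$, and the characteristic $(X,V)$ of $f^\alpha$ passing through $(x+t\widehat{v},v)$ at time $t$ coincides with $(\mathcal{X}(s)+s\widehat{\mathcal{V}}(s),\mathcal{V}(s))$. The bound $|X(s)| \leq k + \widehat{\beta_\alpha} s$ rearranges to $s - |\mathcal{X}(s)+s\widehat{\mathcal{V}}(s)| + 2k \geq k + s(1-\widehat{\beta_\alpha})$, and specialising to $s=t$ and $X(t)=x+t\widehat{v}$ yields the last inequality.
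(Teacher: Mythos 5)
Your proof is correct and follows essentially the same route as the paper: propagate the bound along the characteristics using the initial support and the field decay \eqref{equation_estimee_EB_Glassey}, with the key point that the velocity increment is only logarithmic in the velocity bound (through the lower bound $s-|X(s)|+2k\geq k+(1-\widehat{\beta_\alpha})s$), so the bound closes. The only difference is cosmetic: you close via a continuity argument with a pre-chosen large $\beta$, whereas the paper closes via the running supremum $U(t)$ following the self-improvement argument of Wei--Yang.
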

\begin{proof}
    The proof follows \cite[Lemma 2.1]{Wei_Yang_21}. Let $(t,x,v)$ such that $f^\alpha(t,x+t\hat{v},v)\neq 0$. This implies ${f^\alpha}_0(\mathcal{X}(0),\mathcal{V}(0))\neq 0$ and thus we are working with $|\mathcal{X}(0)|\leq k,\,|\mathcal{V}(0)|\leq k_\alpha$. We begin by introducing 
    \begin{equation*}
        U(t)=\sup\{|\mathcal{V}(s)|\,|\, 0\leq s \leq t\}.
    \end{equation*}
    Using the ODE satisfied by $X(s)=\mathcal{X}(s)+s\widehat{\mathcal{V}}(s)$, one finds 
    \begin{equation*}
        s-|\mathcal{X}(s)+s\widehat{\mathcal{V}}(s)|+2k\geq s(1-\widehat{U}(t))+k\geq s(1-\widehat{U}(t)).
    \end{equation*}
    Here we used that $\lambda\in\R_+\mapsto \frac{\lambda}{\langle \lambda\rangle}$ is increasing. Now consider $t_1,t_2\in [0,t]$. Using \eqref{equation_estimee_EB_Glassey} we derive, 
    \begin{align*}
        |\mathcal{V}(t_1)-\mathcal{V}(t_2)|&\leq \int_0^t \frac{C}{(s-|\mathcal{X}(s)+s\widehat{\mathcal{V}}(s)|+2k)(s+|\mathcal{X}(s)+s\widehat{\mathcal{V}}(s)|+2k)}\mathrm{d}s\\
        &\leq \int_0^{k/(1-\widehat{U}(t))}\frac{C}{(s+2k)k}\mathrm{d}s+\int_{k/(1-\widehat{U}(t))}^{+\infty}\frac{C}{(s(1-\widehat{U}(t))+k)(s+2k)}\mathrm{d}s.
    \end{align*}
    Computing the last two integrals we derive 
    \begin{equation*}
        |\mathcal{V}(t_1)-\mathcal{V}(t_2)|\leq \frac{C}{k}\left[\log\left(\frac{3}{2}\right)-\log\left(1-\widehat{U}(t)\right)+\frac{1}{2}\phi_0\left(\frac{1}{2}-\widehat{U}(t)\right)\right],
    \end{equation*}
    where 
    \begin{equation*}
        \phi_0(z)=\frac{\ln(1+z)}{z},\quad \phi(0)=1, \quad -1<z<+\infty.
    \end{equation*}
    From this we follow \cite[Lemma 2.1]{Wei_Yang_21} and find a constant $\widetilde{C}$ independent of $t$ such that 
    \begin{equation*}
        |\mathcal{V}(t)|\leq 2|V(0)|+\widetilde{C}\leq 2k_\alpha +\widetilde{C}.
    \end{equation*}
    Hence, there exists $\beta>0$ such that, for $\beta_\alpha=\frac{\beta}{m_\alpha}$, 
    \begin{equation*}
        |v|\leq \beta_\alpha.
    \end{equation*}
    It remains to prove the estimate on the support of $f^\alpha(t,\cdot,v)$. Since $|v|\leq \beta_\alpha$ and $g^\alpha$ is constant along its characteristics, we know that $\left|\mathcal{V}(s)\right|\leq \beta_\alpha$. So we derive directly 
    \begin{equation*}
        \left|\mathcal{X}(s)+s\widehat{\mathcal{V}}(s)\right|\leq |\mathcal{X}(0)|+\widehat{\beta_\alpha}s\leq k+\widehat{\beta_\alpha}s.
    \end{equation*}
    Implying directly $|x+t\widehat{v}|\leq k+\widehat{\beta_\alpha}t$. This gives the inclusion for the support of $f^\alpha$ as well as the other two inequalities.
\end{proof}

\begin{remark}
    One can easily go back to $f_\alpha$ to find its support. In fact, we have
    \begin{equation*}
        \supp f_\alpha(t,\cdot)\subset \{(x,v)\in\R^3_x\times\R^3_v\,|\, |x|\leq \widehat{\beta}_{max}t+k,\,|v|\leq \beta\}.
    \end{equation*}
    Moreover, since $(f_\alpha,E,B)$ is a solution to \eqref{equation_RVM1} with compactly supported initial data,
    \begin{equation*}
        \supp (E,B)(t,\cdot)\subset \{x\in\R^3_x\,|\, |x|\leq t+k\}.
    \end{equation*}
\end{remark}


\subsection{Properties of $g^\alpha$}
With these properties for the characteristics of $g^\alpha$ in mind, we now want to estimate the support of $g^\alpha$ and control its derivatives. 
\begin{proposition}
    \label{proposition_support_g}
    There exists a constant $C>0$ such that, for all $t\geq 0$ and any $\alpha$,
    \begin{equation}
        \supp(g^\alpha(t,\cdot))\subset \{(x,v)\in\R^3_x\times\R^3_v\,|\, |x|\leq C\log(2+t),\, |v|\leq \beta_\alpha\}.
    \end{equation}
\end{proposition}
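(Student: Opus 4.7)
The plan is to track $x$ backwards along the characteristics of $g^\alpha$ and exploit the strong decay that Lemma \ref{lemme_support_f^alpha_en_x} guarantees for the Lorentz force along these trajectories. The bound on $|v|$ is immediate: since $g^\alpha$ is constant along the characteristics defined by \eqref{ODE_characteristics_g}, if $g^\alpha(t,x,v) \neq 0$ then $f^\alpha_0(\mathcal{X}(0),\mathcal{V}(0)) \neq 0$, so Lemma \ref{lemme_support_f^alpha_en_x} (applied to the characteristics of $f^\alpha$ via the remark) gives $|v| = |\mathcal{V}(t)| \leq \beta_\alpha$.

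For the bound on $|x|$, I would directly estimate $|\dot{\mathcal{X}}(s)|$ from the first equation of \eqref{ODE_characteristics_g}. The bracket is bounded by $2|L(s,\mathcal{X}(s)+s\widehat{\mathcal{V}}(s),\mathcal{V}(s))|$ because $|\widehat{\mathcal{V}}(s)|\leq 1$, and $\tfrac{s}{\mathcal{V}^0(s)}\leq s$, giving
\begin{equation*}
    |\dot{\mathcal{X}}(s)| \lesssim s\,|L(s,\mathcal{X}(s)+s\widehat{\mathcal{V}}(s),\mathcal{V}(s))|.
\end{equation*}
On the support of $g^\alpha$, Lemma \ref{lemme_support_f^alpha_en_x} provides the two lower bounds
\begin{equation*}
    s-|\mathcal{X}(s)+s\widehat{\mathcal{V}}(s)|+2k \geq k + s(1-\widehat{\beta_\alpha}), \qquad s+|\mathcal{X}(s)+s\widehat{\mathcal{V}}(s)|+2k \geq s+2k,
\end{equation*}
so the field estimate \eqref{equation_estimee_EB_Glassey} together with $|\widehat{\mathcal{V}}(s)|\leq 1$ yields $|L(s,\cdot,\cdot)| \lesssim (1+s)^{-2}$ on the support. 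Consequently $|\dot{\mathcal{X}}(s)| \lesssim (1+s)^{-1}$.

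Integrating from $0$ to $t$ gives $|\mathcal{X}(t) - \mathcal{X}(0)| \lesssim \log(2+t)$, and since $|\mathcal{X}(0)| \leq k$, we conclude
\begin{equation*}
    |x| = |\mathcal{X}(t)| \leq k + C\log(2+t) \lesssim \log(2+t),
\end{equation*}
which is exactly the stated inclusion. There is no real obstacle: the whole argument is a direct consequence of Lemma \ref{lemme_support_f^alpha_en_x} (which provides the crucial linear lower bound on $t-|x+t\widehat{v}|+2k$ responsible for the quadratic decay of $L$ along characteristics) combined with the integrability of $s/(1+s)^2$; the only quantity to watch is the logarithmic growth coming from the last factor of $1/(1+s)$.
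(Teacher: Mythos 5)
Your proof is correct and follows essentially the same route as the paper: bound $|\dot{\mathcal{X}}(s)|$ via \eqref{equation_estimee_EB_Glassey} and the lower bound on $s-|\mathcal{X}(s)+s\widehat{\mathcal{V}}(s)|+2k$ from Lemma \ref{lemme_support_f^alpha_en_x}, obtaining $|\dot{\mathcal{X}}(s)|\lesssim (2+s)^{-1}$, and integrate to get the logarithmic growth of $|\mathcal{X}|$. The velocity bound $|v|\leq\beta_\alpha$ is likewise exactly the content of Lemma \ref{lemme_support_f^alpha_en_x}, so there is nothing to add.
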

\begin{proof}
    With the previous notations, we have, thanks to \eqref{equation_estimee_EB_Glassey}, 
    \begin{equation*}
        \big|\dot{\mathcal{X}}(s)\big|\lesssim \frac{s}{(s+|\mathcal{X}(s)+s\widehat{\mathcal{V}}(s)|+2k)(s-|\mathcal{X}(s)+s\widehat{\mathcal{V}}(s)|+2k)}. 
    \end{equation*}
    Now, thanks to Proposition \ref{lemme_support_f^alpha_en_x}, we have $\big|\dot{\mathcal{X}}(s)\big|\lesssim \frac{1}{s+2}$, which implies
    \begin{equation*}
        |\mathcal{X}(s)|\lesssim k+\log(2+s)\lesssim \log(2+s).
    \end{equation*}
\end{proof}
We now estimate $g^\alpha$ and its first order derivatives.

\begin{proposition}
    \label{proposition_estimee_gradv_g}
    Consider $(t,x,v)$ with $t\geq 0$. We have the following estimates, for any $1\leq\alpha\leq N$,
    \begin{align}
        |g^\alpha(t,x,v)| &\leq \|{f^\alpha}_0\|_\infty,\\
        |\nabla_xg^\alpha(t,x,v)|& \lesssim 1,\\
        |\nabla_vg^\alpha(t,x,v)|& \lesssim \log^2(2+t).
    \end{align}
\end{proposition}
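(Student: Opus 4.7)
The plan is to use the characteristics of $g^\alpha$ defined in \eqref{ODE_characteristics_g} to reduce the question to a linear ODE for the gradients along them. The $L^\infty$ bound is immediate: since $g^\alpha$ is constant along these characteristics, $g^\alpha(t,x,v) = g^\alpha(0,\mathcal{X}(0),\mathcal{V}(0)) = f_0^\alpha(\mathcal{X}(0),\mathcal{V}(0))$, which is bounded by $\|f_0^\alpha\|_\infty$. For the gradients, writing the drift fields in \eqref{equation_satisfaite_par_g^alpha} as $F(s,x,v)$ and $G(s,x,v)$, differentiating the transport equation produces the closed system, along the characteristic,
\begin{align*}
    \tfrac{d}{ds}\nabla_x g^\alpha &= -(\nabla_x F)\,\nabla_x g^\alpha - (\nabla_x G)\,\nabla_v g^\alpha,\\
    \tfrac{d}{ds}\nabla_v g^\alpha &= -(\nabla_v F)\,\nabla_x g^\alpha - (\nabla_v G)\,\nabla_v g^\alpha,
\end{align*}
with initial values of size $O(1)$ inherited from $f_0^\alpha\in C^1_c$.

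The quantitative input comes from the decay of the Lorentz force on the support. By Lemma \ref{lemme_support_f^alpha_en_x}, along any relevant characteristic one has $s-|\mathcal{X}(s)+s\widehat{\mathcal{V}}(s)|+2k\gtrsim 2+s$, so \eqref{equation_estimee_EB_Glassey}--\eqref{equation_estimee_gradEB_Glassey} yield $|L|\lesssim (2+s)^{-2}$, $|\nabla_v L|\lesssim (2+s)^{-2}$ (the latter coming from $\widehat{v}\times B$), and $|\nabla_x L|\lesssim \log(2+s)(2+s)^{-3}$. Taking into account the $s/v^0$ prefactor in $F$ together with the extra factor $s$ produced when $\nabla_v$ acts on the position argument $x+s\widehat{v}$ of $L$, one obtains the coefficient bounds
$$|\nabla_x F|\lesssim \tfrac{\log(2+s)}{(2+s)^2},\quad |\nabla_x G|\lesssim \tfrac{\log(2+s)}{(2+s)^3},\quad |\nabla_v F|\lesssim \tfrac{\log(2+s)}{2+s},\quad |\nabla_v G|\lesssim \tfrac{\log(2+s)}{(2+s)^2}.$$

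Setting $U(s)=|\nabla_x g^\alpha|$ and $V(s)=|\nabla_v g^\alpha|$ along the characteristic, with $U(0)+V(0)\lesssim 1$, I would close the estimates by a two-step Gronwall. First, assuming $U\lesssim 1$ on $[0,t]$, the $V$-equation has integrable self-coefficient $\log(2+s)/(2+s)^2$, so Gronwall gives
$$V(s)\lesssim 1+\int_0^s \tfrac{\log(2+\sigma)}{2+\sigma}\,d\sigma\lesssim \log^2(2+s).$$
Plugging this back into the $U$-equation, the source term becomes $\int_0^s \log(2+\sigma)(2+\sigma)^{-3}V(\sigma)\,d\sigma\lesssim \int_0^\infty \log^3(2+\sigma)(2+\sigma)^{-3}\,d\sigma<\infty$, uniformly in $s$, so Gronwall with the integrable self-coefficient closes to $U(s)\lesssim 1$ and confirms the bootstrap. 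Reading off at $s=t$ yields the two claimed bounds.

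The only real obstacle is the non-integrable coefficient $|\nabla_v F|\lesssim \log(2+s)/(2+s)$, which is what produces the $\log^2$ loss; its presence is unavoidable, since the $s/v^0$ prefactor in $F$ combined with the extra $s$ coming from differentiating $L(s,x+s\widehat{v},v)$ in $v$ costs exactly one power of $(2+s)^{-1}$ relative to $\nabla_v G$. All other couplings gain at least one additional power of decay, which is what makes the back-reaction from $V$ onto $U$ remain bounded and therefore closes the argument.
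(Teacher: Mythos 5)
Your setup is exactly the paper's: the $L^\infty$ bound by constancy along the characteristics of \eqref{ODE_characteristics_g}, differentiation of \eqref{equation_satisfaite_par_g^alpha} in $x$ and $v$, the field decay $|L|\lesssim (2+s)^{-2}$, $|\nabla_x L|\lesssim \log(2+s)(2+s)^{-3}$ along characteristics via Lemma \ref{lemme_support_f^alpha_en_x}, and your four coefficient bounds coincide with the integrands in the paper's Duhamel inequalities. The gap is in the closure. As written, your ``two-step Gronwall'' is a circular bootstrap: you assume $U\lesssim 1$ on $[0,t]$, deduce $V\leq C(1+A)\log^2(2+s)$ where $A$ is the assumed constant, and then re-derive $U\leq C'(1+A)$. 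Since the output constant is \emph{not} smaller than the input one, nothing is ``confirmed'': a bootstrap/continuity argument needs a strict improvement of the constant, which here would require smallness of the quantity $\int_0^\infty \log^3(2+\sigma)(2+\sigma)^{-3}\,\mathrm{d}\sigma$ times the implicit constants coming from $C_0$ and $k$ --- and Hypothesis \ref{hypothese_suffisante} assumes no smallness at all (the point of the proposition is precisely that it holds for any solution satisfying the hypothesis). Boundedness of the back-reaction from $V$ onto $U$ is not by itself enough to close a bootstrap.

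The fix is to avoid the bootstrap and eliminate one unknown, as the paper does. First apply Gronwall \emph{unconditionally} to each inequality with respect to its integrable self-coupling $\log(2+s)(2+s)^{-2}$, which yields
\begin{align*}
U(\tau)&\lesssim 1+\int_0^\tau \frac{\log(2+s)}{(2+s)^3}\,V(s)\,\mathrm{d}s,\qquad
V(\tau)\lesssim 1+\int_0^\tau \frac{\log(2+s)}{2+s}\,U(s)\,\mathrm{d}s,
\end{align*}
with no hypothesis on $U$. Then substitute the first bound into the second to get a closed inequality
$V(\tau)\lesssim \log^2(2+\tau)+\log^2(2+\tau)\int_0^\tau \log(2+u)(2+u)^{-3}V(u)\,\mathrm{d}u$, and apply Gronwall to $G(s)=V(s)\log^{-2}(2+s)$, using that $\log^3(2+s)(2+s)^{-3}$ is integrable; this gives $V\lesssim \log^2(2+\tau)$ with a constant depending only on the data, and feeding it back into the first inequality gives $U\lesssim 1$. (Alternatively, your bootstrap could be salvaged by first obtaining a crude bound on a fixed interval $[0,T]$ and running the continuity argument only on $[T,\infty)$ where the tail $\int_T^\infty \log^3(2+\sigma)(2+\sigma)^{-3}\,\mathrm{d}\sigma$ is small, but that is more work than the substitution and is not what you wrote.)
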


\begin{proof}
    The first estimate is immediate by using the characteristics. Then, recall \eqref{equation_satisfaite_par_g^alpha} and let $\mathcal{L}$ be the associated operator such that $\mathcal{L}g=0$. We have 
    \begin{align*}
        \mathcal{L}(\partial_{x_i}g^\alpha)=&-\frac{t}{v^0}\frac{e_\alpha}{m_\alpha}\partial_{x_i}\left[\widehat{v}(L(t,x+t\widehat{v},v)\cdot\widehat{v})-L(t,x+t\widehat{v},v)\right]\cdot\nabla_xg^\alpha-\frac{e_\alpha}{m_\alpha}\partial_{x_i}[L(t,x+t\widehat{v},v)]\cdot\nabla_vg^\alpha,\\
        \mathcal{L}(\partial_{v_i}g^\alpha)=&-t\frac{e_\alpha}{m_\alpha}\partial_{v_i}\left[\frac{1}{v^0}\Big(\widehat{v}(L(t,x+t\widehat{v},v)\cdot\widehat{v})-L(t,x+t\widehat{v},v)\Big)\right]\cdot\nabla_xg^\alpha-\frac{e_\alpha}{m_\alpha}\partial_{v_i}[L(t,x+t\widehat{v},v)]\cdot\nabla_vg^\alpha.
    \end{align*}
    Now let us consider $\mathcal{X}(s)=\mathcal{X}(s,t,x,v),\mathcal{V}(s)=\mathcal{V}(s,t,x,v)$ the characteristics of $g^\alpha$. Recall Lemma \ref{lemme_support_f^alpha_en_x}, so for $g^\alpha(t,x,v)\neq 0$, we have
    \begin{equation*}
        s-|\mathcal{X}(s)+s\widehat{\mathcal{V}}(s)|+2k\geq s(1-\widehat{\beta}_{max})+k.
    \end{equation*}
    This implies the following estimates 
    \begin{equation*}
        |(E,B)(s,\mathcal{X}(s)+s\widehat{\mathcal{V}}(s))|\lesssim \frac{1}{(2+s)^2},\qquad |\nabla_x(E,B)(s,\mathcal{X}(s)+s\widehat{\mathcal{V}}(s))|\lesssim \frac{\log(2+s)}{(2+s)^3}.
    \end{equation*}
    We can now use the equation satisfied by $\partial_{x_i}g^\alpha$ and $\partial_{v_i}g^\alpha$ to derive, thanks to Duhamel's principle,

    \begin{align*}
        \left|(\nabla_x g^\alpha)(\tau,\mathcal{X}(\tau),\mathcal{V}(\tau))\right|&\lesssim \|f_0\|_{C^1}+\int_0^\tau \frac{\log(2+s)}{(2+s)^2}|\nabla_xg^\alpha|+\frac{\log(2+s)}{(2+s)^3}|\nabla_vg^\alpha|\mathrm{d}s,\\
        \left|(\nabla_v g^\alpha)(\tau,\mathcal{X}(\tau),\mathcal{V}(\tau))\right|&\lesssim \|f_0\|_{C^1}+\int_0^\tau \frac{\log(2+s)}{(2+s)}|\nabla_xg^\alpha|+\frac{\log(2+s)}{(2+s)^2}|\nabla_vg^\alpha|\mathrm{d}s,
    \end{align*}
    where in the integral $\nabla_xg^\alpha,\nabla_v g^\alpha$ are evaluated at $(s,\mathcal{X}(s),\mathcal{V}(s))$. Since $s\mapsto \frac{\log(2+s)}{(2+s)^2}$ is integrable, by Grönwall's inequality we have for all $\tau\geq 0$
    \begin{align}
        \left|(\nabla_x g^\alpha)(\tau,\mathcal{X}(\tau),\mathcal{V}(\tau))\right|&\lesssim \|f_0\|_{C^1} +\int_0^\tau\frac{\log(2+s)}{(2+s)^3}|\nabla_vg^\alpha|\mathrm{d}s,\label{equation_nabla_x_g_caracteristiques}\\
        \left|(\nabla_v g^\alpha)(\tau,\mathcal{X}(\tau),\mathcal{V}(\tau))\right|&\lesssim \|f_0\|_{C^1}+\int_0^\tau \frac{\log(2+s)}{(2+s)}|\nabla_xg^\alpha|\mathrm{d}s.\label{equation_nabla_v_g_caracteristiques}
    \end{align}
    We now insert \eqref{equation_nabla_x_g_caracteristiques} in \eqref{equation_nabla_v_g_caracteristiques} to derive,
    \begin{align*}
        \left|(\nabla_v g^\alpha)(\tau,\mathcal{X}(\tau),\mathcal{V}(\tau))\right|&\lesssim \|f_0\|_{C^1}\left(1+\int_0^\tau \frac{\log(2+s)}{(2+s)}\mathrm{d}s\right) +\int_0^\tau \frac{\log(2+s)}{(2+s)}\left(\int_0^s\frac{\log(2+u)}{(2+u)^3}|\nabla_v g^\alpha|\mathrm{d}u\right)\mathrm{d}s\\
        &\lesssim\log^2(2+\tau)+\log^2(2+\tau)\int_0^\tau \frac{\log(2+u)}{(2+u)^3}|\nabla_v g^\alpha|\mathrm{d}u.
    \end{align*}
    We now apply Gronwall's inequality to $G(s)=|\nabla_v g^\alpha(s,\mathcal{X}(s),\mathcal{V}(s))|\log^{-2}(2+s)$. Since $s\mapsto \frac{\log^3(2+s)}{(2+s)^3}$ is integrable we derive
    \begin{equation*}
        \frac{1}{\log^2(2+\tau)}\left|(\nabla_v g^\alpha)(\tau,\mathcal{X}(\tau),\mathcal{V}(\tau))\right|=G(\tau)\lesssim 1,
    \end{equation*}
    and the estimate on $\nabla_v g^\alpha$ follows. Inserting the estimate on $\nabla_v g^\alpha$ in \eqref{equation_nabla_x_g_caracteristiques} we derive the other estimate. Finally, taking $\tau=t$, we derive the result.
\end{proof}

\begin{remark}
    From \eqref{equation_nabla_x_g^alpha}--\eqref{equation_nabla_v_g^alpha} and the above proposition, one can easily derive estimates on the derivatives of $f^\alpha$. Moreover, the derivatives of $f_\alpha$ (resp. $g_\alpha$) satisfy the same estimates as those satisfied by $f^\alpha$ (resp. $g^\alpha$).
\end{remark}

\subsection{Convergence of the spatial average}
We focus on the spatial average of $g^\alpha$ since this quantity governs the asymptotic behavior of the source terms in the Maxwell equations.

\begin{proposition}
    \label{corollaire_estimee_Q_infini}
    For any $\alpha$, there exists $Q^\alpha_\infty\in C^0_c(\R^3_v)$ such that, for all $t\geq 0$ and $v\in\R^3_v$, 
    \begin{equation}
        \label{equation_corollaire_estimee_Q_infini}
        \left|\int_{\R^3_x}g^\alpha(t,x,v)\mathrm{d}x - Q^\alpha_\infty(v)\right|\lesssim \frac{\log^5(2+t)}{2+t}.
    \end{equation}
\end{proposition}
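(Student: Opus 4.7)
Set $F^\alpha(t,v) := \int_{\R^3_x} g^\alpha(t,x,v)\,\mathrm{d}x$. The strategy is to show that $F^\alpha$ is Cauchy in $t$ uniformly in $v$, with the sharp rate claimed in \eqref{equation_corollaire_estimee_Q_infini}. Differentiating in $t$ and using \eqref{equation_satisfaite_par_g^alpha}, I would write
\begin{equation*}
    \partial_t F^\alpha(t,v) \;=\; -\frac{t}{v^0}\frac{e_\alpha}{m_\alpha}\int_{\R^3_x}\bigl[\widehat{v}(L\cdot\widehat{v})-L\bigr]\cdot\nabla_x g^\alpha\,\mathrm{d}x \;-\; \frac{e_\alpha}{m_\alpha}\int_{\R^3_x} L\cdot\nabla_v g^\alpha\,\mathrm{d}x,
\end{equation*}
with $L = L(t,x+t\widehat{v},v)$. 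Because $g^\alpha(t,\cdot,v)$ is compactly supported in $x$ (Proposition \ref{proposition_support_g}), there are no boundary terms and I can integrate by parts in $x$ in the first integral. This is the crucial move: the coefficient carries a factor of $t$, so direct estimation would yield only $O(\log^3/t)$, which fails to be integrable. After the integration by parts, the integrand contains $\nabla_x L(t,x+t\widehat{v},v)$ rather than $L$ itself, and this gains one extra power in the decay.

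Next I would simply plug in the pointwise bounds. On the support of $g^\alpha(t,\cdot,v)$, Lemma \ref{lemme_support_f^alpha_en_x} gives $t-|x+t\widehat{v}|+2k\gtrsim 2+t$, so by \eqref{equation_estimee_EB_Glassey}--\eqref{equation_estimee_gradEB_Glassey}
\begin{equation*}
    |L(t,x+t\widehat{v},v)|\lesssim (2+t)^{-2},\qquad |\nabla_x L(t,x+t\widehat{v},v)|\lesssim \log(2+t)(2+t)^{-3}.
\end{equation*}
Combined with Proposition \ref{proposition_support_g} (support volume $\lesssim \log^3(2+t)$) and Proposition \ref{proposition_estimee_gradv_g} ($|g^\alpha|\lesssim 1$, $|\nabla_v g^\alpha|\lesssim \log^2(2+t)$), the integrated-by-parts first term is bounded by $t\cdot \log(2+t)(2+t)^{-3}\cdot \log^3(2+t)\lesssim \log^4(2+t)(2+t)^{-2}$, and the second is bounded by $(2+t)^{-2}\cdot \log^2(2+t)\cdot \log^3(2+t)=\log^5(2+t)(2+t)^{-2}$. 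Thus
\begin{equation*}
    |\partial_t F^\alpha(t,v)|\;\lesssim\;\frac{\log^5(2+t)}{(2+t)^2},
\end{equation*}
uniformly in $v$. Since the right-hand side is integrable in time, $F^\alpha(t,v)$ converges as $t\to\infty$ to some limit $Q^\alpha_\infty(v)$, and integrating the bound from $t$ to $+\infty$ (one integration by parts in the resulting $\int \log^5(u)/u^2\,\mathrm{d}u$) yields \eqref{equation_corollaire_estimee_Q_infini}.

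Finally, for the qualitative properties of $Q^\alpha_\infty$: the uniform-in-$v$ convergence rate, together with the continuity of each $F^\alpha(t,\cdot)$ (inherited from the $C^1$ regularity and compact $x$-support of $g^\alpha$), gives $Q^\alpha_\infty\in C^0(\R^3_v)$; and since Proposition \ref{proposition_support_g} forces $F^\alpha(t,v)=0$ whenever $|v|>\beta_\alpha$, the limit $Q^\alpha_\infty$ is supported in $\{|v|\leq\beta_\alpha\}$. The main technical point is the integration by parts in $x$: without it, the factor $t$ in front of the Lorentz force term is not absorbed and one falls short of time-integrability. Exploiting the improved decay of $\nabla_x L$ (one extra power in $t-|x|+2k$, at the price of a logarithm) is exactly what makes the argument work.
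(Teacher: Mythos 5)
Your proposal is correct and follows essentially the same route as the paper: integrate the transport equation for $g^\alpha$ in $x$, integrate by parts in the term carrying the factor $t$ so that the improved decay of $\nabla_x L$ (via \eqref{equation_estimee_gradEB_Glassey} and Lemma \ref{lemme_support_f^alpha_en_x}) applies, and combine with the support and derivative bounds of Propositions \ref{proposition_support_g}--\ref{proposition_estimee_gradv_g} to get $|\partial_t\int g^\alpha\,\mathrm{d}x|\lesssim \log^5(2+t)(2+t)^{-2}$, which is integrable and yields the stated rate, continuity and compact support of $Q^\alpha_\infty$ exactly as in the paper.
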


\begin{proof} 
We begin by integrating \eqref{equation_satisfaite_par_g^alpha} over $\R^3_x$ to derive
\begin{align*}
    \partial_t \int_{\R^3_x}g^\alpha(t,x,v)\mathrm{d}x =&-\frac{t}{v^0}\frac{e_\alpha}{m_\alpha}\int_{\R^3_x}\Big[\widehat{v}(L(t,x+t\widehat{v},v)\cdot\widehat{v})-L(t,x+t\widehat{v},v)\Big]\cdot\nabla_x g^\alpha(t,x,v)\mathrm{d}x\\
    &-\frac{e_\alpha}{m_\alpha}\int_{\R^3_x}L(t,x+t\widehat{v},v)\cdot\nabla_v g^\alpha(t,x,v)\mathrm{d}x.
\end{align*}
The second term of the right-hand side can directly be dealt with. Indeed, thanks to \eqref{equation_estimee_EB_Glassey}, Lemma \ref{lemme_support_f^alpha_en_x} and Propositions  \ref{proposition_support_g}--\ref{proposition_estimee_gradv_g}, we have 
\begin{equation*}
    \left|\frac{e_\alpha}{m_\alpha}\int_{\R^3_x}L(t,x+t\widehat{v},v)\cdot\nabla_v g^\alpha(t,x,v)\mathrm{d}x\right|\lesssim \int_{|x|\lesssim \log(2+t)} \frac{\log^2(2+t)}{(2+t)^2} \mathrm{d}x\lesssim \frac{\log^5(2+t)}{(2+t)^2}.
\end{equation*}
It remains to study the first term. By integration by parts, we derive
\begin{align*}
    t\int_{\R^3_x}\Big[\widehat{v}(L(t,x+t\widehat{v},v)\cdot\widehat{v})-L(t,x+t\widehat{v},v)\Big]\cdot\nabla_x g^\alpha(t,x,v)\mathrm{d}x&=t\int_{\R^3_x} (\nabla_x\cdot L)(t,x+t\hat{v},v) g^\alpha(t,x,v)\mathrm{d}x \\
    &-t\int_{\R^3_x} \sum_{i=1}^3 \widehat{v}^i((\partial_{x_i} L)(t,x+t\hat{v},v)\cdot \widehat{v})g^\alpha(t,x,v)\mathrm{d}x.
\end{align*}
Again from \eqref{equation_estimee_gradEB_Glassey}, Lemma \ref{lemme_support_f^alpha_en_x} and Propositions \ref{proposition_support_g}--\ref{proposition_estimee_gradv_g}, we obtain 
\begin{align*}
    t\left|\int_{\R^3_x}\Big[\widehat{v}(L(t,x+t\widehat{v},v)\cdot\widehat{v})-L(t,x+t\widehat{v},v)\Big]\cdot\nabla_x g^\alpha(t,x,v)\mathrm{d}x\right|&\lesssim t\int_{\R^3_x} |\nabla_x(E,B)(t,x+t\widehat{v})|\,|g^\alpha(t,x,v)|\mathrm{d}x\\
    &\lesssim \int_{|x|\lesssim \log(2+t)} \frac{\log(2+t)}{(2+t)^2} \mathrm{d}x\\
    &\lesssim \frac{\log^4(2+t)}{(2+t)^2}.
\end{align*}

Finally, combining these two estimates, we obtain 
\begin{equation*}
    \left|\partial_t \int_{\R^3_x}g^\alpha(t,x,v)\mathrm{d}x\right|\lesssim \frac{\log^5(2+t)}{(2+t)^2}.
\end{equation*}
Now, since $\partial_t \int_{\R^3_x} g^\alpha(t,x,v)\mathrm{d}x$ is integrable in $t$, this proves the existence of the limit $Q^\alpha_\infty$. Moreover $g^\alpha(t,x,\cdot)$ has its support in $\{v\in\R^3_v,\,|v|\leq \beta_\alpha\}$ so we already know that $\supp(Q^\alpha_\infty)\subset \{v\in\R_v^3,\,|v|\leq \beta_\alpha\}$. \\
Finally, since $\int_{\R^3_x}f(t,x,\cdot)\mathrm{d}x$ is continuous and converges uniformly towards $Q^\alpha_\infty$ we know that $Q^\alpha_\infty$ is also continuous. 
\end{proof}


\subsection{Link between the particle density and the asymptotic charge}
In \cite{Glassey_Strauss_1987} they prove that the velocity average decays like $(1+t)^{-3}$. Here we provide the asymptotic expansion of $\int f_\alpha \mathrm{d}v$. The following proposition justifies, for $h(v)=1$ and $h(v)=\widehat{v_\alpha}$, the asymptotic expansion of the charge and current densities $(\rho,j)$ stated in the outline of the proof. Recall, in particular, that $\widehat{v_\alpha}(m_\alpha v)=\widehat{v}$.

\begin{proposition}
    \label{proposition_lien_f_et_Q_infini}
    Let $h\in C^1(\R^3_v)$. Then for any $\alpha$, all $t>0$ and all $|x|<t$ we have 
    \begin{equation}
        \left|t^3\int_{\R^3_v} h(v)f_\alpha(t,x,v)\mathrm{d}v-m_\alpha^3\left[\langle \cdot\rangle^5 h(m_\alpha \cdot) Q^\alpha_{\infty}\right]\left(\widecheck{\frac{x}{t}}\right)\right|\lesssim \frac{\log^6(2+t)}{2+t}\sup_{|v|\leq \beta} (|h(v)|+|\nabla_v h(v)|).
    \end{equation}
\end{proposition}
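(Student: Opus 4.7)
The plan is to reduce $\int h(v) f_\alpha(t,x,v)\,\mathrm{d}v$ to a spatial integral involving $g^\alpha$, for which the asymptotic content of Proposition \ref{corollaire_estimee_Q_infini} is readily available. Using the relations $f_\alpha(t,x,v) = f^\alpha(t,x,v/m_\alpha)$ and $f^\alpha(t,x,v) = g^\alpha(t, x-t\widehat{v}, v)$, I would first change variable $v \mapsto m_\alpha v$, then apply the substitution $v = \widecheck{u}$ with $|u|<1$ (whose Jacobian is $\langle\widecheck{u}\rangle^5$ by Lemma \ref{lemme_jacobien_hat_v}), and finally set $y = x-tu$ (Jacobian $t^{-3}$). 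Setting $F(v) := \langle v\rangle^5 h(m_\alpha v)$ and $w(y) := \widecheck{(x-y)/t}$, these transformations yield
\[
t^3 \int_{\R^3_v} h(v) f_\alpha(t,x,v)\,\mathrm{d}v = m_\alpha^3 \int_{|x-y|<t} F(w(y))\, g^\alpha(t, y, w(y))\,\mathrm{d}y,
\]
and the target quantity in the proposition is exactly $m_\alpha^3 F(w(0))Q^\alpha_\infty(w(0))$, since $w(0) = \widecheck{x/t}$.

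Next, I would decompose the difference into three pieces by interpolating both in the velocity argument of $g^\alpha$ and between $\int g^\alpha\,\mathrm{d}y$ and $Q^\alpha_\infty$:
\begin{align*}
\int F(w(y)) g^\alpha(t,y,w(y))\,\mathrm{d}y - F(w(0))Q^\alpha_\infty(w(0))
&= \int F(w(y))\bigl[g^\alpha(t,y,w(y))-g^\alpha(t,y,w(0))\bigr]\,\mathrm{d}y \\
&\quad+ \int\bigl[F(w(y))-F(w(0))\bigr]g^\alpha(t,y,w(0))\,\mathrm{d}y \\
&\quad+ F(w(0))\left[\int g^\alpha(t,y,w(0))\,\mathrm{d}y - Q^\alpha_\infty(w(0))\right].
\end{align*}

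Each term is then controlled by the tools from Section \ref{section_preliminary_results}. Proposition \ref{proposition_support_g} confines every integrand to $|y| \lesssim \log(2+t)$, giving a volume factor $\log^3(2+t)$; on this support $(x-y)/t$ stays bounded by $\widehat{\beta_\alpha}$, and a similar bound holds for $x/t$ whenever the integrand is non-trivial (otherwise both sides of the proposition vanish), so $\widecheck{}$ is uniformly Lipschitz on the relevant region and $|w(y)-w(0)| \lesssim |y|/t$. The first term is bounded via $\|\nabla_v g^\alpha\|_\infty \lesssim \log^2(2+t)$ from Proposition \ref{proposition_estimee_gradv_g}, producing $\log^6(2+t)/t$; the second is controlled by $\|\nabla F\|_\infty \lesssim \sup_{|v|\leq\beta}(|h|+|\nabla h|)$ combined with $\|g^\alpha\|_\infty \lesssim 1$, yielding $\log^4(2+t)/t$; the third is directly given by Proposition \ref{corollaire_estimee_Q_infini}, contributing $\log^5(2+t)/t$. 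Summing, we recover the announced bound $\log^6(2+t)/t$.

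The main delicate point is the transition zone $\widehat{\beta_\alpha}t < |x| \leq \widehat{\beta_\alpha}t + C\log(2+t)$, where $w(0)$ lies outside both the support of $Q^\alpha_\infty$ and the $v$-support of $g^\alpha(t,y,\cdot)$. There the second and third terms vanish identically, and one must exploit the fact that $g^\alpha(t,y,w(0))=0$ to rewrite $|g^\alpha(t,y,w(y))| = |g^\alpha(t,y,w(y)) - g^\alpha(t,y,w(0))| \lesssim \log^2(2+t)|y|/t$, which then integrates to the desired bound on $|y| \lesssim \log(2+t)$.
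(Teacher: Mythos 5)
Your reduction and overall strategy coincide with the paper's: the same changes of variables bring everything to $\int F(w(y))\,g^\alpha(t,y,w(y))\,\mathrm{d}y$, the comparison of evaluation points is done by the mean value theorem together with $|\nabla_v g^\alpha|\lesssim\log^2(2+t)$ and the spatial support $|y|\lesssim\log(2+t)$, and the constant-in-$y$ part is handled by Proposition \ref{corollaire_estimee_Q_infini}. The difference is only in the telescoping: the paper keeps the weight, $h$ and $g^\alpha$ glued together in one function $G(u)=\left[\langle\cdot\rangle^5 h(m_\alpha\cdot)\,g^\alpha(t,y,\cdot)\right](\widecheck{u})$ and compares $G\big(\tfrac{x-y}{t}\big)$ with $G\big(\tfrac{x}{t}\big)$, whereas you split $F$ and $g^\alpha$ at different points. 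This creates cross terms that your estimates do not cover: in your first and second terms, on the set where $g^\alpha(t,y,w(0))\neq0$ (hence $|x|\le\widehat{\beta_\alpha}t$) but $|x-y|>\widehat{\beta_\alpha}t$ (possible because $|y|$ may be of size $\log(2+t)$), the factor $F(w(y))$, and the intermediate points in the mean value bound for $F(w(y))-F(w(0))$, involve $h$ and $\nabla h$ at velocities of modulus slightly larger than $\beta$. So your claims that ``$(x-y)/t$ stays bounded by $\widehat{\beta_\alpha}$'' and that $\|\nabla F\|_\infty\lesssim\sup_{|v|\leq\beta}(|h|+|\nabla h|)$ are not correct as written; what your argument yields is the supremum over a slightly larger ball (radius $\beta_\alpha+O(\log(2+t)/t)$ in the $f^\alpha$ normalization). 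This is fixable---for instance replace $h$ by a $C^1$ extension of $h|_{\{|v|\le\beta\}}$ with comparable $C^1$ norm (the left-hand side of the proposition only depends on $h$ on that ball), or regroup as the paper does so that every evaluation of $h$ and of the weight $\langle\cdot\rangle^5$ is accompanied by $g^\alpha$ or $\nabla_v g^\alpha$ at the same point---but as stated it is a gap relative to the announced inequality.

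A second, smaller point: for your three-term identity to be exact, all integrals must be taken over $\{|x-y|<t\}$, and then the third term is $F(w(0))\big[\int_{|x-y|<t}g^\alpha(t,y,w(0))\,\mathrm{d}y-Q^\alpha_\infty(w(0))\big]$, which is not literally the quantity controlled by Proposition \ref{corollaire_estimee_Q_infini}. You must either check that $\{|x-y|\ge t\}$ does not meet the support of $g^\alpha(t,\cdot,w(0))$ (true for $t$ large when $|x|\le\widehat{\beta_\alpha}t$, while otherwise $g^\alpha(t,\cdot,w(0))$ and $Q^\alpha_\infty(w(0))$ vanish identically), or estimate the missing region separately, which is exactly the role of the term $I_2$ in the paper. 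Your transition-zone remark shows you saw part of this issue, but the domain question itself is left unaddressed.
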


\begin{proof} Since $f_\alpha(t,\cdot)$ and $Q_\infty^\alpha$ are continuous and compactly supported, it is enough to prove the estimate for $t\geq 1$. We begin by the change of variable $w=v_\alpha$ so that
\begin{equation*}
    \int_{\R^3_v} h(v)f_\alpha(t,x,v)\mathrm{d}v=m_\alpha^3\int_{\R^3_v} h(m_\alpha v)f^\alpha(t,x,v)\mathrm{d}v.
\end{equation*}
Applying Proposition \ref{corollaire_estimee_Q_infini} to $v=\widecheck{\frac{x}{t}}$, in view of the support of $g^\alpha$ and $Q^\alpha_\infty$, we derive
\begin{equation}
    \label{equation_preuve1_estimee_g^alpha}
    \left|\int_{\R^3_y}\left[\langle \cdot \rangle^5 h g^\alpha(t,y,\cdot)\right]\left(\widecheck{\frac{x}{t}}\right)\mathrm{d}y - \left[\langle \cdot \rangle^5 h Q^\alpha_\infty\right]\left(\widecheck{\frac{x}{t}}\right)\right|\lesssim \frac{\log^5(2+t)}{2+t}\sup_{|v|\leq \beta_\alpha} |h(v)|.
\end{equation}
This leaves us with proving that, for any $h\in C^1(\R^3_v)$,
\begin{equation}
    \left|t^3\int_{\R^3_v}h(v)f^\alpha(t,x,v)\mathrm{d}v-\int_{\R^3_y}\left[\langle \cdot \rangle^5 h g^\alpha(t,y,\cdot)\right]\left(\widecheck{\frac{x}{t}}\right)\mathrm{d}y \right|\lesssim \frac{\log^6(2+t)}{2+t}\sup_{|v|\leq \beta_\alpha} (|h(v)|+|\nabla_v h(v)|).
    \label{equation_estimee_Q_vitesse_1}
\end{equation}
First, use Lemma \ref{lemme_jacobien_hat_v} and the change of variables $y=x-t\widehat{v}$ to derive 
\begin{equation*}
    t^3\int_{\R^3_v}h(v)f^\alpha(t,x,v)\mathrm{d}v=t^3\int_{|v|\leq \beta_\alpha}h(v)g^\alpha(t,x-t\widehat{v},v)\mathrm{d}v=\int_{|x-y|\leq \widehat{\beta_\alpha}t}\left[\langle \cdot \rangle^5 h g^\alpha(t,y,\cdot)\right]\left(\widecheck{\frac{x-y}{t}}\right)\mathrm{d}y.
\end{equation*}
Here we observe the correct function evaluated in $\widecheck{\frac{x-y}{t}}$ instead of $\widecheck{\frac{x}{t}}$. We force the desired term to appear by writing 
\begin{align*}
    \int_{|x-y|<t}\left[\langle \cdot \rangle^5 h g^\alpha(t,y,\cdot)\right]\left(\widecheck{\frac{x-y}{t}}\right)\mathrm{d}y =&  \int_{\R^3_y}\left[\langle \cdot \rangle^5 h g^\alpha(t,y,\cdot)\right]\left(\widecheck{\frac{x}{t}}\right)\mathrm{d}y \\
    & +  \int_{|x-y|<t}\left[\langle \cdot \rangle^5 h g^\alpha(t,y,\cdot)\right]\left(\widecheck{\frac{x-y}{t}}\right)\mathrm{d}y -  \int_{|x-y|<t}\left[\langle \cdot \rangle^5 h g^\alpha(t,y,\cdot)\right]\left(\widecheck{\frac{x}{t}}\right)\mathrm{d}y\\
    &- \int_{|x-y|\geq t}\left[\langle \cdot \rangle^5 h g^\alpha(t,y,\cdot)\right]\left(\widecheck{\frac{x}{t}}\right)\mathrm{d}y\\
    =&~I_0+I_1+I_2.
\end{align*}
We now show that $I_1$ and $I_2$ are both $O(\log^6(2+t)(2+t)^{-1})$. \\
\textbf{Estimate of $I_1$.} For a fixed $y$ we want to apply the mean value theorem to $G:v\mapsto \left[\langle \cdot \rangle^5 h g^\alpha(t,y,\cdot)\right]\left(\widecheck{v}\right)$. Since $|\nabla_v \widecheck{v}|\lesssim \langle\widecheck{v}\rangle^3$, by differentiating $G$ we get 
\begin{align*}
    |\nabla_vG(v)|&\lesssim \langle \widecheck{v}\rangle^8|\nabla_v h(\widecheck{v})||g^\alpha(t,y,\widecheck{v})| + \langle\widecheck{v}\rangle^7| h(\widecheck{v})||g^\alpha(t,y,\widecheck{v})| + \langle\widecheck{v}\rangle^8|h(\widecheck{v})||\nabla_vg^\alpha(t,y,\widecheck{v})|\\
    &\lesssim \sup_{|u|\leq \beta_\alpha} \langle u\rangle^8(|g^\alpha(t,y,u)| + |\nabla_v g^\alpha(t,y,u)|)(|h(u)|+|\nabla_v h(u)|)\\
    & \lesssim \log^2(2+t)\sup_{|u|\leq \beta_\alpha} (|h(u)|+|\nabla_v h(u)|), 
\end{align*}
by Proposition \ref{proposition_estimee_gradv_g}. Now by the mean value theorem and using the support of $g^\alpha$, we get,
\begin{equation}
    |I_1|\lesssim \sup_{|v|\leq \beta_\alpha} (|h(v)|+|\nabla_v h(v)|)\int_{|y|\lesssim \log(2+t)}\frac{|y|}{t}\log^2(2+t)\mathrm{d}y\lesssim \frac{\log^6(2+t)}{2+t}\sup_{|v|\leq \beta_\alpha} (|h(v)|+|\nabla_v h(v)|).
\end{equation}
\textbf{Estimate of $I_2$.} Recall that $|x|<t$, this allows us to get, for $v=\widecheck{\frac{x}{t}}$ and $|y-x|\geq t$,
\begin{equation*}
    1=\langle v\rangle^2\left(1-\left|\frac{x}{t}\right|^2\right)\leq \frac{|y|(t+|x|)\langle v\rangle^2}{t^2}\leq 2\frac{|y|\langle v\rangle^2}{t},
\end{equation*}
implying that 
\begin{equation*}
    |I_2|\leq \frac{2}{t}\int_{|x-y|\geq t}|y|\left[\langle \cdot \rangle^7 h g^\alpha(t,y,\cdot)\right]\left(\widecheck{\frac{x}{t}}\right)\mathrm{d}y \lesssim \sup_{|v|\leq \beta_\alpha} |h(v)|\frac{2}{t}\int_{|y|\lesssim \log(t)}|y|\mathrm{d}y\lesssim \frac{\log^4(2+t)}{2+t}\sup_{|v|\leq \beta_\alpha} |h(v)|.
\end{equation*}
Combining these estimates, we get \eqref{equation_estimee_Q_vitesse_1}. With \eqref{equation_preuve1_estimee_g^alpha}, this implies the result.
\end{proof}


\section{Modified scattering}

\label{section_modified_scattering}


\subsection{Estimations of the fields}
\label{sous-section_estimee_champs1}
Let us start by recalling the Glassey-Strauss decomposition.

\begin{proposition}
    \label{proposition_decomp_glassey_strauss}
    Let $t\geq 0$ and $x\in\R^3_x$. The following decomposition of the field holds.
    \begin{equation}
        E(t,x)=E_T(t,x)+E_S(t,x)+E_{data}(t,x),
    \end{equation}
    where 
    \begin{align}
        \label{equation_ecriture_E_T} E_T(t,x)&=-\sum_{1\leq\alpha\leq N}e_\alpha\int_{|x-y|\leq t} \int_{\R^3_v}\frac{(\omega+\widehat{v_\alpha})(1-|\widehat{v_\alpha}|^2)}{(1+\widehat{v_\alpha}\cdot\omega)^2}f_\alpha(t-|x-y|,y,v)\mathrm{d}v\frac{\mathrm{d}y}{|y-x|^2}, \\
        \label{equation_ecriture_E_S} E_S(t,x)&=\sum_{1\leq\alpha\leq N} e^2_\alpha\int_{|y-x|\leq t}\int_{\R^3_v}\frac{\omega+\widehat{v_\alpha}}{1+\widehat{v_\alpha}\cdot\omega} (E+\widehat{v_\alpha}\cdot B)(t-|x-y|,y)\cdot\nabla_v f_\alpha(t-|x-y|,y,v)\mathrm{d}v\frac{\mathrm{d}y}{|x-y|}, \\
        \label{equation_ecriture_E_data} E_{data}(t,x)&=\mathcal{E}(t,x)-\sum_{1\leq\alpha\leq N}\frac{e_\alpha}{t}\int_{|y-x|=t}\int_{\R^3_v}\frac{\omega -(\widehat{v_\alpha}\cdot\omega)\widehat{v_\alpha}}{1+\widehat{v_\alpha}\cdot\omega}f_{\alpha0}(y,v)\mathrm{d}v\mathrm{d}S_y,
    \end{align}
    with
    \begin{equation*}
        \mathcal{E}(t,x)=\frac{1}{4\pi t^2}\int_{|y-x|=t}\left[E_0(y)+((y-x)\cdot\nabla)E_0(y)+t\nabla\times B_0(y)\right]\mathrm{d}S_y -\sum_{1\leq\alpha\leq N}\frac{e_\alpha}{4\pi t} \int_{|y-x|=t} \int_{\R^3_v} \widehat{v_\alpha}f_{\alpha0}(y,v) \mathrm{d}v\mathrm{d}S_y,
    \end{equation*}
    and $\omega=\frac{x-y}{|x-y|}$.
\end{proposition}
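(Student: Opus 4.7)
The plan is to rederive the classical Glassey-Strauss representation from Kirchhoff's formula combined with the Vlasov equation. First, I would use the Maxwell system to obtain the wave equation $\square E = -4\pi(\nabla_x\rho+\partial_t j)$, then apply the retarded representation formula for $\square$ in three spatial dimensions to write
\begin{equation*}
    E(t,x) = \mathcal{E}_0(t,x) - \int_{|y-x|\leq t} \frac{(\nabla\rho+\partial_t j)(t-|y-x|,y)}{|y-x|}\,dy,
\end{equation*}
where $\mathcal{E}_0$ is the free-wave propagator applied to $(E_0,B_0)$ and matches exactly the contribution $\mathcal{E}$ appearing inside $E_{data}$. Substituting the expressions of $\rho$ and $j$, the integrand becomes $\sum_\alpha e_\alpha\int(\nabla_y f_\alpha+\widehat{v_\alpha}\,\partial_\tau f_\alpha)(\tau,y,v)\,dv$ with $\tau=t-|y-x|$.

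Next, I would carry out the key Glassey-Strauss trick: eliminate the retarded-time derivative using Vlasov. For a fixed $v$, set $\Phi_v(y):=f_\alpha(t-|y-x|,y,v)$, so that the chain rule gives $\nabla_y\Phi_v = \omega\,\partial_\tau f_\alpha + \nabla_y f_\alpha$ (up to a sign depending on the orientation of $\omega$). Combining this identity with the Vlasov equation $\partial_\tau f_\alpha = -\widehat{v_\alpha}\cdot\nabla_y f_\alpha - e_\alpha L\cdot\nabla_v f_\alpha$ yields a $2\times 2$ linear system for $\partial_\tau f_\alpha$ and $\widehat{v_\alpha}\cdot\nabla_y f_\alpha$, whose inversion produces the characteristic factor $(1+\widehat{v_\alpha}\cdot\omega)^{-1}$. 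Assembling, one rewrites
\begin{equation*}
    \nabla_y f_\alpha+\widehat{v_\alpha}\,\partial_\tau f_\alpha = A(\omega,v)\,\nabla_y\Phi_v + B(\omega,v)\,e_\alpha L\cdot\nabla_v f_\alpha,
\end{equation*}
where $A,B$ are rational functions of $\omega,\widehat{v_\alpha}$ whose expansion produces exactly the coefficients $(\omega+\widehat{v_\alpha})(1-|\widehat{v_\alpha}|^2)/(1+\widehat{v_\alpha}\cdot\omega)^2$ and $(\omega+\widehat{v_\alpha})/(1+\widehat{v_\alpha}\cdot\omega)$ of the statement.

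To conclude, the $\nabla_y\Phi_v$ contribution is integrated by parts in $y$ against the kernel $|y-x|^{-1}$. Using $\nabla_y|y-x|^{-1}=\pm\omega|y-x|^{-2}$, the interior contribution yields $E_T$ with the sharpened $|y-x|^{-2}$ kernel and the announced angular weight (after performing the $v$-integration), while the boundary term on $\{|y-x|=t\}$ (where $\tau=0$, so $\Phi_v=f_{\alpha0}$) combines with $\mathcal{E}_0$ to produce the full $E_{data}$ of the statement. The remaining contribution involving $\nabla_v f_\alpha$ and the Lorentz force $E+\widehat{v_\alpha}\times B$ is precisely $E_S$. The main obstacle is the bookkeeping of Step 2: the particular angular factors in $E_T$ and $E_S$ emerge only after a careful inversion producing the denominator $(1+\widehat{v_\alpha}\cdot\omega)$; since this computation is performed in full detail in \cite{Glassey_Strauss_1987}, I would simply refer the reader there rather than reproduce it.
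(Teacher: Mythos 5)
Your outline is correct and follows essentially the same route the paper relies on: the paper does not prove this proposition but simply recalls the representation from \cite{Glassey_Strauss_1987}, where it is derived exactly as you describe (wave equation for $E$, retarded/Kirchhoff representation, the Glassey--Strauss splitting of $\partial_t$ and $\nabla_y$ into derivatives of $f_\alpha(t-|y-x|,y,v)$ tangent to the backward light cone plus the Vlasov transport term, then integration by parts in $y$ producing the $|y-x|^{-2}$ kernel and the boundary term at $|y-x|=t$). Deferring the precise bookkeeping of the angular factors to \cite{Glassey_Strauss_1987} is therefore consistent with the paper's own treatment.
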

\begin{remark}
    The same decomposition holds for $B(t,x)=B_T(t,x)+B_S(t,x)+B_{data}(t,x)$. The expression $B_T$ and $B_S$ is obtained by replacing $\omega +\widehat{v_\alpha}$ by $\omega \times\widehat{v_\alpha}$ in $E_T$ and $E_S$. Moreover, the expression of $B_{data}$ only depends on the initial data, so the estimates follow similarly in the next propositions. In the following, we restrict ourselves to the study of $E$ as the analysis of $B$ is similar.
\end{remark}
As stated in the introduction, we want to identify the part of $E=E_{data}+E_S+E_T$ that decays like $t^{-2}$ for $|x|\leq \gamma t$, with $\gamma<1$. We start by showing that $E_{data}$ and $E_S$ decay at least as $t^{-3}$. For this, we have to improve the estimate obtained by Glassey and Strauss in \cite{Glassey_Strauss_1987} for $E_S$.
\begin{proposition}
    \label{proposition_estimee_E_data}
    For all $(t,x)\in\R_+\times\R^3_x$, we have the following estimate 
    \begin{equation}
        |(E_{data},B_{data})(t,x)|\lesssim \langle t\rangle^{-1}\1_{|t-|x||\leq k}.
    \end{equation}
\end{proposition}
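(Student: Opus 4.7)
The argument relies entirely on the compact support assumption for the initial data, namely that $E_0$, $B_0$ and each $f_{\alpha 0}$ vanish outside $\{|y|\leq k\}$. First, I would observe that every integrand appearing in the formulas for $E_{data}$ and $B_{data}$ is supported in $\{|y|\leq k\}$, while the domain of integration is restricted to the sphere $\{|y-x|=t\}$. By the triangle inequality, these two conditions are simultaneously compatible only when $|t-|x||\leq k$, which directly produces the indicator factor $\1_{|t-|x||\leq k}$ in the final estimate.

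Second, I would estimate the surface area of the effective integration domain $S_{t,x}:=\{|y-x|=t\}\cap\{|y|\leq k\}$. Being a spherical cap contained in the fixed ball of radius $k$, it satisfies $|S_{t,x}|\lesssim 1$ for $t\geq 1$; for $t\leq 1$ the ambient sphere itself has total area $4\pi t^2\leq 4\pi$, so $|S_{t,x}|\lesssim 1$ in all cases. Next, I would bound the integrands pointwise using the $C^1$ regularity of the data, which gives uniform bounds on $E_0$, $\nabla E_0$, $B_0$, $\nabla B_0$ and $f_{\alpha 0}$. For the more singular factor $(1+\widehat{v_\alpha}\cdot\omega)^{-1}$ appearing in the second term of \eqref{equation_ecriture_E_data}, I would use the uniform lower bound
\begin{equation*}
    1+\widehat{v_\alpha}\cdot\omega\geq 1-|\widehat{v_\alpha}|\geq 1-\frac{k}{\sqrt{m_\alpha^2+k^2}}>0,
\end{equation*}
valid on $\supp(f_{\alpha 0})\subset\{|v|\leq k\}$.

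Combining these ingredients, for $t\geq 1$ each integral in $\mathcal{E}$ and in the second term of $E_{data}$ is $O(1)$, and the $t^{-1}$ or $t^{-2}$ prefactors yield a bound $\lesssim t^{-1}\1_{|t-|x||\leq k}$. The point requiring a little care is the regime $t\leq 1$, where the $t^{-2}$ prefactor in $\mathcal{E}$ is a priori problematic; however, the sphere itself has area $\lesssim t^2$ there, which cancels the singularity, and moreover $|(y-x)\cdot\nabla E_0(y)|\leq t\|\nabla E_0\|_\infty$ on $\{|y-x|=t\}$, so the middle term of $\mathcal{E}$ is also controlled. The two regimes combine to $|E_{data}(t,x)|\lesssim \langle t\rangle^{-1}\1_{|t-|x||\leq k}$, and an identical analysis handles $B_{data}$. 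The main obstacle is just the small-$t$ bookkeeping; no genuinely delicate estimate is needed.
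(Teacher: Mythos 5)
Your proof is correct and follows essentially the same route as the paper: the compact support of $(f_{\alpha0},E_0,B_0)$ both produces the indicator $\1_{|t-|x||\leq k}$ and confines the surface integrals to a cap of bounded area, after which uniform bounds on the integrands (including the lower bound on $1+\widehat{v_\alpha}\cdot\omega$) and the $t^{-1},t^{-2}$ prefactors give the estimate. You merely spell out details the paper leaves implicit, such as the kernel bound and the small-$t$ regime where the $4\pi t^2$ sphere area absorbs the $t^{-2}$ factor.
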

\begin{proof} 
First, recall the expression of $E_{data}$ from \eqref{equation_ecriture_E_data}. Using the support of $f_{\alpha0},E_0,B_0$ every term of $E_{data}$ is bounded by 
\begin{equation*}
    C\left(\frac{1}{t}+\frac{1}{t^2}\right)\int_{\substack{|y-x|=t\\ |y|\leq k}} \mathrm{d}S_y\1_{|t-|x||<k},
\end{equation*}
which implies the result.
\end{proof}
\begin{proposition}
    \label{proposition_estimee_E_S}
    For all $(t,x)\in\R_+\times\R^3_x$, we have the following estimate 
    \begin{equation}
        |(E_S,B_S)(t,x)|\lesssim (t+|x|+2k)^{-1}(t-|x|+2k)^{-2}.
    \end{equation}
\end{proposition}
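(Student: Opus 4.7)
The approach I would take is to first integrate by parts in $v$ to remove the $\nabla_v f_\alpha$ factor from \eqref{equation_ecriture_E_S}. The key algebraic observation is that $\nabla_v\cdot(E+\widehat{v_\alpha}\times B)=0$ by the antisymmetry of the cross product (since $\partial_{v_i}\widehat{v_{\alpha,k}}=\delta_{ik}/v^0_\alpha-v_iv_k/(v^0_\alpha)^3$ and both $\epsilon_{ikl}\delta_{ik}$ and $\epsilon_{ikl}v_iv_k$ vanish), so after integration by parts only the $v$-derivative of the kernel $(\omega+\widehat{v_\alpha})/(1+\widehat{v_\alpha}\cdot\omega)$ survives. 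On the support of $f_\alpha$, Lemma \ref{lemme_support_f^alpha_en_x} gives $|v|\leq\beta$, hence $|\widehat{v_\alpha}|\leq\widehat{\beta_\alpha}<1$; this keeps $1+\widehat{v_\alpha}\cdot\omega$ uniformly bounded below and makes all these $v$-derivatives bounded. The boundary terms vanish by the compact $v$-support.

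Next I would use Hypothesis \ref{hypothese_suffisante} together with Lemma \ref{lemme_support_f^alpha_en_x}: on $\mathrm{supp}(f_\alpha(s,y,\cdot))$ we have $|y|\leq\widehat{\beta_\alpha}s+k$, hence $s-|y|+2k\gtrsim s+2k$ and therefore
\begin{equation*}
|(E,B)(s,y)|\lesssim\frac{1}{(s+|y|+2k)(s+2k)}.
\end{equation*}
Combining $|x-y|\leq|x|+|y|$ with the support condition yields $s+2k\gtrsim t-|x|+2k$, and from $|y|+|x|\geq|x-y|$ we get $s+|y|+2k\gtrsim t-|x|+2k$. Using $\int |f_\alpha|\,dv\lesssim 1$ (bounded on compact $v$-support), the proof reduces to the spatial estimate
\begin{equation*}
\int_{|y-x|\leq t,\,|y|\leq\widehat{\beta_\alpha}s+k}\frac{dy}{(s+|y|+2k)(s+2k)\,|x-y|}\lesssim\frac{1}{(t+|x|+2k)(t-|x|+2k)^2},\qquad s=t-|x-y|.
\end{equation*}

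To establish this I would pass to polar coordinates $y=x-r\omega'$ centered at $x$, so $s=t-r$, and the support condition becomes the angular restriction $|x-r\omega'|\leq\widehat{\beta_\alpha}(t-r)+k$. Standard spherical integration shows the restricted angular measure is of the form $\pi\bigl[(\widehat\beta(t-r)+k)^2-(r-|x|)^2\bigr]/(r|x|)$ in the relevant range, and in particular vanishes at the radial endpoints $r=R_\pm$ determined by the support. I would split the radial range into $r\leq(t+|x|+2k)/4$ — where the improved lower bound $s+|y|+2k\gtrsim t+|x|+2k$ holds (from $|y|+|x|\geq|x-y|$ giving $t-|x-y|+|y|\geq t-2r+|x|+|y|$) — and its complement, where the sharply restricted angular measure together with $s+2k\gtrsim t-|x|+2k$ handles the contribution. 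The analysis of $B_S$ is entirely analogous, replacing $\omega+\widehat{v_\alpha}$ by $\omega\times\widehat{v_\alpha}$.

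The main obstacle is the final spatial inequality: the naive separation of the integrand loses several powers of $t$ and $|x|$, so one must exploit the precise combination of (i) the sharp angular measure cutoff near the radial endpoints of the support, (ii) the improved factor $s+|y|+2k\gtrsim t+|x|+2k$ available only for small $|x-y|$, and (iii) the lower bound $s+2k\gtrsim t-|x|+2k$ coming from combining the triangle inequality with the support constraint. Putting these together to recover simultaneously the $(t+|x|+2k)^{-1}$ and $(t-|x|+2k)^{-2}$ factors is the delicate point; no further input beyond the results of Section \ref{section_preliminary_results} and Hypothesis \ref{hypothese_suffisante} is required.
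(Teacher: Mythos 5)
Your first step (integrating by parts in $v$ via $\nabla_v\cdot(E+\widehat{v_\alpha}\times B)=0$ and bounding the $v$-derivatives of the kernel on the support of $f_\alpha$) is exactly the paper's. The gap comes right after: you estimate the velocity average only by $\int f_\alpha\,\mathrm{d}v\lesssim 1$ and claim the proof reduces to the spatial inequality
\begin{equation*}
\int_{\substack{|y-x|\leq t\\ |y|\leq\widehat{\beta_\alpha}s+k}}\frac{\mathrm{d}y}{(s+|y|+2k)(s+2k)\,|x-y|}\ \lesssim\ (t+|x|+2k)^{-1}(t-|x|+2k)^{-2},\qquad s=t-|x-y|.
\end{equation*}
This reduced inequality is false, so the ``delicate point'' you defer (recovering the powers through the angular cutoff) cannot be carried out. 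Take $x=0$: then $s+|y|+2k=t+2k$, the domain contains the ball $|y|\leq \frac{\widehat{\beta_\alpha}}{1+\widehat{\beta_\alpha}}t$, and on it $s+2k\leq t+2k$, so the left-hand side is at least $\int_0^{ct}\frac{4\pi\rho^2\,\mathrm{d}\rho}{(t+2k)^2\rho}\gtrsim 1$ for large $t$, while the claimed right-hand side is of order $t^{-3}$. There is no angular restriction at $x=0$, so no refinement of the angular measure can repair a loss of three powers of $t$.

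Those three powers are precisely what the dispersive decay of the velocity average supplies, and this is the ingredient the paper's proof uses and yours discards. After the integration by parts, the paper bounds
\begin{equation*}
\int_{\R^3_v}|(E+\widehat{v_\alpha}\times B)(s,y)|\,f_\alpha(s,y,v)\,\mathrm{d}v\ \lesssim\ (s+|y|+2k)^{-5}
\end{equation*}
by combining the field estimate \eqref{equation_estimee_EB_Glassey} with the $t^{-3}$ decay of $\int f_\alpha\,\mathrm{d}v$ (Proposition \ref{proposition_lien_f_et_Q_infini} with $h=1$, or already the velocity-average decay of \cite{Glassey_Strauss_1987}) and the support information of Lemma \ref{lemme_support_f^alpha_en_x}, which turns $(s-|y|+2k)^{-1}$ into another factor of $(s+|y|+2k)^{-1}$. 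One is then left with $\int_{|y-x|\leq t}(t-|x-y|+|y|+2k)^{-5}|x-y|^{-1}\mathrm{d}y$, which is handled by the Glassey--Strauss change of variables $\tau=t-|x-y|$, $\lambda=|y|$ and an elementary computation yielding $(t+|x|+2k)^{-1}(t-|x|+2k)^{-2}$. So the missing idea is not a sharper treatment of the geometry of the domain, but the insertion of the $(2+s)^{-3}$ decay of the velocity average before integrating in $y$; with only $\int f_\alpha\,\mathrm{d}v\lesssim 1$ the stated bound is unreachable.
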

\begin{proof} Here we slightly refine the analysis performed for $E_S$ in \cite[Lemma 6]{Glassey_Strauss_1987} by exploiting the support of $f_\alpha$. First recall \eqref{equation_ecriture_E_S} and $\nabla_v \cdot (E+\widehat{v}\times B)=0$. By integration by parts we obtain 
\begin{equation*}
    E_S(t,x)=-\sum_{1\leq \alpha\leq N} e_\alpha^2\int_{|y-x|\leq t}\int_{\R^3_v}\nabla_v\left[\frac{\omega+\widehat{v_\alpha}}{1+\widehat{v_\alpha}\cdot\omega}\right]\cdot (E+\widehat{v_\alpha}\times B)(t-|x-y|,y)f_\alpha(t-|x-y|,y,v)\mathrm{d}v\frac{\mathrm{d}y}{|x-y|}.
\end{equation*}
Now, recall that the kernel where $\omega$ appears is bounded on the support of $f_\alpha$. Then by Proposition \ref{proposition_lien_f_et_Q_infini} we have 
\begin{align*}
    \int_{\R^3_v} |(E+\widehat{v_\alpha}\times B)(t-|x-y|,y)| f_\alpha(t-|x-y|,y,v)\mathrm{d}v&\lesssim \frac{\1_{|y|\leq \widehat{\beta}_{max}(t-|y-x|)+k}}{(t-|x-y|+|y|+2k)^{4}(t-|x-y|-|y|+2k)}\\
    &\lesssim (t-|x-y|+|y|+2k)^{-5},
\end{align*}
thanks to the support of $f_\alpha$. This implies
\begin{equation*}
    |E_S(t,x)|\lesssim \int_{|y-x|\leq t} \frac{1}{(t-|y-x|+|y|+2k)^5}\frac{\mathrm{d}y}{|x-y|}.
\end{equation*}
Now, by \cite[Lemma 7]{Glassey_Strauss_1987}, we have 
\begin{equation*}
    I:=\int_{|y-x|\leq t} \frac{1}{(t-|y-x|+|y|+2k)^5}\frac{\mathrm{d}y}{|x-y|}=\frac{1}{r}\int_0^t\int_a^b \frac{\lambda\mathrm{d}\lambda\mathrm{d}\tau}{(\tau +\lambda +2k)^{5}}\leq\frac{1}{r}\int_0^t\int_a^b \frac{\mathrm{d}\lambda\mathrm{d}\tau}{(\tau +\lambda +2k)^{4}},
\end{equation*}
where $\tau=t-|x-y|,\,\lambda=|y|$ and $r=|x|$. Moreover, the bounds of the integral are $a=|r-t+\tau|$ and $b=r+t-\tau$. We first write, as $b-a\leq b-(t-r-\tau)=2r$ and $\tau+b=t+r$,
\begin{equation*}
    I\lesssim \frac{1}{r}\int_0^t \frac{(b-a)(2\tau+b+a+4k)^2}{(\tau+a+2k)^3(\tau+b+2k)^3}\mathrm{d}\tau\lesssim \frac{1}{r}\int_0^t \frac{(b-a)\mathrm{d}\tau}{(\tau+a+2k)^3(\tau+b+2k)}\lesssim \frac{1}{t+r+2k}\int_0^t \frac{\mathrm{d}\tau}{(\tau+a+2k)^3
    }.
\end{equation*}
If $t\leq |x|< t+k$, then $I\lesssim (t+|x|+2k)^{-1} \lesssim (t+|x|+2k)^{-1}(t-|x|+2k)^{-2}$. Otherwise, $|x|<t$ and we can split $I$ in two parts 
\begin{equation*}
    I\lesssim \frac{1}{t+r+2k}\int_0^{t-r} \frac{\mathrm{d}\tau}{(t-r+2k)^3} +\frac{1}{t+r+2k}\int_{t-r}^t \frac{\mathrm{d}\tau}{(\tau+2k)^3}\lesssim \frac{1}{(t+r+2k)(t-r+2k)^2}.
\end{equation*}

\end{proof}


\subsection{Asymptotic expansion of the fields} 
\label{sous-section_estimee_champs2}

Having proved the estimate on $(E_S,E_{data})$, it remains to study $E_T$. The goal of this subsection is to find a form of asymptotic expansion for $E_T$.

\begin{proposition}
    Let $v\in\R^3_v$. Consider 
    \begin{equation}
        \E_\alpha(v):=-\int_{\substack{|y|\leq 1 \\ |y+\widehat{v}|< 1-|y|}} \left[\langle\cdot\rangle^5 W\left(\frac{y}{|y|},\cdot\right)Q^\alpha_\infty\right]\left(\frac{\widecheck{y+\widehat{v}}}{1-|y|}\right)\frac{1}{(1-|y|)^3}\frac{\mathrm{d}y}{|y|^2},
    \end{equation}
    \begin{equation}
        \mathbb{B}_\alpha(v):=-\int_{\substack{|y|\leq 1 \\ |y+\widehat{v}|< 1-|y|}} \left[\langle\cdot\rangle^5 \mathcal{W}\left(\frac{y}{|y|},\cdot\right)Q^\alpha_\infty\right]\left(\frac{\widecheck{y+\widehat{v}}}{1-|y|}\right)\frac{1}{(1-|y|)^3}\frac{\mathrm{d}y}{|y|^2},
    \end{equation}
    with $W(\omega,v)=\frac{(\omega+\widehat{v})}{\langle v\rangle^2(1+\widehat{v}\cdot\omega)^2}$ and $\mathcal{W}(\omega,v)=\frac{(\omega\times\widehat{v})}{\langle v\rangle^2(1+\widehat{v}\cdot\omega)^2}$. Then, $\E_\alpha,\mathbb{B}_\alpha\in C^0(\R^3_v)$.
\end{proposition}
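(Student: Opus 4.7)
The plan is to establish integrability of the integrand in the definition of $\mathcal{E}_\alpha(v)$ and then apply dominated convergence in $v$. Since $\mathcal{W}(\omega,v)=(\omega\times\widehat{v})/(\langle v\rangle^2(1+\widehat{v}\cdot\omega)^2)$ shares the denominator of $W$ and a numerator of the same order, the argument for $\mathbb{B}_\alpha$ follows verbatim from the one for $\mathcal{E}_\alpha$, so I will only discuss $\mathcal{E}_\alpha$.

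The main input is the compact support of $Q^\alpha_\infty$ provided by Proposition \ref{corollaire_estimee_Q_infini}: $\supp(Q^\alpha_\infty)\subset\{|v|\leq \beta_\alpha\}$. On the effective domain of integration, where the integrand is nonzero, $v':=\widecheck{y+\widehat{v}}/(1-|y|)$ satisfies $|v'|\leq \beta_\alpha$. Using the elementary inequality $|\widecheck{u}|=|u|/(1-|u|^2)\geq |u|$, this constraint implies $|y+\widehat{v}|\leq \beta_\alpha(1-|y|)$, and the reverse triangle inequality then yields
\begin{equation*}
|y|\leq \frac{|\widehat{v}|+\beta_\alpha}{1+\beta_\alpha}<1.
\end{equation*}
For $v$ in a bounded set $\{|v|\leq M\}$, this bound is uniform: $|y|\leq c_M<1$, and consequently $(1-|y|)^{-3}$ is uniformly bounded on the effective domain. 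Moreover $|\widehat{v'}|\leq \beta_\alpha/\sqrt{1+\beta_\alpha^2}<1$, hence $1+\widehat{v'}\cdot\omega\geq 1-\beta_\alpha/\sqrt{1+\beta_\alpha^2}>0$ for every unit vector $\omega$, so $W(y/|y|,v')$ is uniformly bounded. Combining this with the obvious bounds on $\langle v'\rangle^5$ and $Q^\alpha_\infty$, the integrand (extended by zero outside its natural domain) is dominated, for $|v|\leq M$, by $C_M\,|y|^{-2}\,\1_{|y|\leq c_M}$, which is integrable since $\int_{|y|\leq 1}|y|^{-2}\,\mathrm{d}y<\infty$.

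To conclude continuity at an arbitrary $v_0\in\R^3_v$, I apply dominated convergence. For every $y\neq 0$ with $|y+\widehat{v_0}|\neq 1-|y|$ (a full-measure set in $\R^3_y$), the integrand (extended by zero) is continuous in $v$ at $v_0$, as a composition of continuous maps: $v\mapsto\widehat{v}$ smooth on $\R^3$, $\widecheck{~}$ continuous on the open unit ball, $W$ continuous wherever its denominator is positive, and $Q^\alpha_\infty$ continuous on $\R^3_v$ and vanishing on the boundary of its support. The uniform dominating function above then gives $\mathcal{E}_\alpha(v_n)\to\mathcal{E}_\alpha(v_0)$ whenever $v_n\to v_0$.

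The main obstacle is the $(1-|y|)^{-3}$ factor: a priori this is a non-integrable singularity at $|y|=1$, which is a limit point of the integration domain $\{|y+\widehat{v}|<1-|y|\}$. The key geometric input saving the argument is that the compact support of $Q^\alpha_\infty$, combined with $|\widecheck{u}|\geq |u|$, confines the effective region of integration to $\{|y|\leq c_M\}$ uniformly for $v$ in a bounded set. Once this is in place, the rest of the analysis is routine.
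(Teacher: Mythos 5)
Your proof is correct and takes essentially the same route as the paper: the compact support of $Q^\alpha_\infty$ confines the effective domain of integration to $\{|y|\le c<1\}$ (uniformly for $v$ bounded), so the $(1-|y|)^{-3}$ factor is harmless and only the integrable $|y|^{-2}$ singularity remains, after which continuity follows from the continuity of $Q^\alpha_\infty$ — you merely spell out the dominated-convergence step that the paper leaves implicit. One purely notational caveat: the argument of the bracket is intended to be $\widecheck{\left(\frac{y+\widehat{v}}{1-|y|}\right)}$, as in the proof of Proposition \ref{proposition_estimee_E_T}, rather than $\widecheck{y+\widehat{v}}/(1-|y|)$, but your argument is insensitive to this reading and goes through unchanged.
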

\begin{proof}
     Using the support of $Q^\alpha_\infty$ we know that we integrate over $\{y|\,|y+\widehat{v}|\leq \widehat{\beta_\alpha}(1-|y|)\}$ so $|y|\leq\frac{|\widehat{v}|+\widehat{\beta_\alpha}}{1+\widehat{\beta_\alpha}}<1$ and thus $1-|y|\geq 1-\frac{|\widehat{v}|+\widehat{\beta_\alpha}}{1+\widehat{\beta_\alpha}}>0$, implying that the integral is well defined. The continuity follows as $Q_\infty^\alpha\in C^0(\R^3)$.
\end{proof}

We begin by performing the change of variables $z=\frac{y-x}{t}$, so that
\begin{equation*}
    E_T(t,x)=\sum_{1\leq\alpha\leq N} e_\alpha m_\alpha^3 E_{\alpha,T},
\end{equation*}
where 
\begin{equation*}
    E_{\alpha,T}(t,x):=-\frac{1}{t^2}\int_{|y|\leq 1 } \int_{\R^3_v}t^3 W\left(\frac{y}{|y|},v\right)f^\alpha(t(1-|y|),ty+x,v)\mathrm{d}v\frac{\mathrm{d}y}{|y|^2}.
\end{equation*}

\begin{proposition}
    \label{proposition_estimee_E_T}
    Let $0<\gamma<1$. Then, there exists $T(\gamma)\geq 0$ such that for all $t\geq T(\gamma)$ and all $|x|\leq \gamma t$ 
    \begin{equation}
       \left|t^2 E_{\alpha,T}(t,x)-\E_\alpha\left(\widecheck{\frac{x}{t}}\right)\right| + \left|t^2 B_{\alpha,T}(t,x)-\mathbb{B}_\alpha\left(\widecheck{\frac{x}{t}}\right)\right |\lesssim \frac{\log^6(2+t)}{2+t} I(\gamma),
    \end{equation}
    where $I(\gamma)$ is a constant depending on $\gamma$.
\end{proposition}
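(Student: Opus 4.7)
The strategy is to apply Proposition~\ref{proposition_lien_f_et_Q_infini} (in its $f^\alpha$-version, obtained via the change of variable $w = v/m_\alpha$) pointwise inside the integral defining $E_{\alpha,T}(t,x)$, at time $\tau := t(1-|y|)$ and spatial point $X := ty+x$, with test function $h(v) := W(y/|y|, v)$. The case of $B_{\alpha,T}$ is identical with $\mathcal{W}$ replacing $W$, so I focus on $E_{\alpha,T}$. The $f^\alpha$-analog of Proposition~\ref{proposition_lien_f_et_Q_infini} asserts, for $|X| < \tau$,
\begin{equation*}
\left|\tau^3 \int_{\R^3_v} h(v) f^\alpha(\tau, X, v) \mathrm{d}v - \left[\langle\cdot\rangle^5 h \, Q^\alpha_\infty\right]\!\left(\widecheck{X/\tau}\right)\right| \lesssim \frac{\log^6(2+\tau)}{2+\tau} \sup_{|v| \leq \beta_\alpha}\bigl(|h(v)| + |\nabla_v h(v)|\bigr).
\end{equation*}
For $|v| \leq \beta_\alpha$ one has $|\widehat{v}| \leq \widehat{\beta_\alpha} < 1$, so $1 + \widehat{v}\cdot(y/|y|) \geq 1 - \widehat{\beta_\alpha} > 0$, making the above supremum uniformly bounded in $y/|y| \in S^2$.

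\textbf{Restriction of the $y$-domain.} Fix $\gamma_* \in \bigl((\widehat{\beta_\alpha}+\gamma)/(1+\widehat{\beta_\alpha}),\, 1\bigr)$. By Lemma~\ref{lemme_support_f^alpha_en_x}, the integrand of $E_{\alpha,T}(t,x)$ is supported in the set where $|ty+x| \leq \widehat{\beta_\alpha} t(1-|y|) + k$, which combined with $|x| \leq \gamma t$ forces $|y| \leq (\widehat{\beta_\alpha} + \gamma + k/t)/(1+\widehat{\beta_\alpha}) \leq \gamma_*$ for $t \geq T(\gamma)$ large enough. On this set, $\tau \geq t(1-\gamma_*) > k/(1-\widehat{\beta_\alpha})$ for $t$ large, so $|X| \leq \widehat{\beta_\alpha}\tau + k < \tau$ and the proposition applies. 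Symmetrically, the integrand of $\E_\alpha(\widecheck{x/t})$ is supported, by the compact support of $Q^\alpha_\infty$, in $\{|y+x/t| \leq \widehat{\beta_\alpha}(1-|y|)\}$, which lies inside $\{|y| \leq (\widehat{\beta_\alpha}+\gamma)/(1+\widehat{\beta_\alpha})\} \subset \{|y| \leq \gamma_*\}$; moreover the domain constraint $|y+\widehat{v}| < 1-|y|$ with $\widehat{v} = x/t$ is automatic.

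\textbf{Matching the main term and controlling the error.} Applying the $f^\alpha$-version of Proposition~\ref{proposition_lien_f_et_Q_infini} pointwise in $y$ and using $\tau^3 = t^3 (1-|y|)^3$ together with $X/\tau = (y+x/t)/(1-|y|)$, I would obtain
\begin{equation*}
t^2 E_{\alpha,T}(t,x) = -\int_{|y| \leq \gamma_*} \frac{1}{(1-|y|)^3 |y|^2}\left[\langle\cdot\rangle^5 W(y/|y|, \cdot)\, Q^\alpha_\infty\right]\!\left(\widecheck{\frac{y+x/t}{1-|y|}}\right) \mathrm{d}y + \mathrm{Err}(t,x).
\end{equation*}
The support reduction above identifies the main integral with $\E_\alpha(\widecheck{x/t})$. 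For the remainder, use $\tau = t(1-|y|) \geq t(1-\gamma_*)$ to bound $\log^6(2+\tau)/(2+\tau) \leq C(\gamma)\log^6(2+t)/(2+t)$, and then
\begin{equation*}
|\mathrm{Err}(t,x)| \lesssim \frac{\log^6(2+t)}{2+t}\int_{|y| \leq \gamma_*} \frac{\mathrm{d}y}{(1-|y|)^3 |y|^2} = \frac{\log^6(2+t)}{2+t} \cdot 4\pi \int_0^{\gamma_*}\!\frac{\mathrm{d}r}{(1-r)^3} =: I(\gamma)\frac{\log^6(2+t)}{2+t}.
\end{equation*}
The same reasoning with $\mathcal{W}$ gives the $B_{\alpha,T}$ bound and completes the proof.

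\textbf{Main obstacle.} Nothing deep is at stake, but care is required to verify that both integrands are supported in the same $y$-region $|y| \leq \gamma_*$ bounded away from $1$. This is what makes $\tau$ grow linearly in $t$ (so that the error from Proposition~\ref{proposition_lien_f_et_Q_infini} actually decays) and simultaneously keeps $(1-|y|)^{-3}$ bounded by a $\gamma$-dependent constant. Once $\gamma_*$ is chosen and $T(\gamma)$ is fixed to ensure both $\tau > k/(1-\widehat{\beta_\alpha})$ on the supports and the support inclusion, the rest is straightforward substitution.
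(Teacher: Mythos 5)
Your proposal is correct and follows essentially the same route as the paper: restrict the $y$-integration using the supports of $f^\alpha$ and $Q^\alpha_\infty$ (so that $1-|y|$ stays bounded below by a $\gamma$-dependent constant and $|ty+x|<t(1-|y|)$ on the relevant set), then apply Proposition~\ref{proposition_lien_f_et_Q_infini} pointwise in $y$ with $h=W(y/|y|,\cdot)$ at time $t(1-|y|)$ and point $ty+x$, and integrate the resulting error against $\frac{\mathrm{d}y}{(1-|y|)^3|y|^2}$. The only difference is presentational (you cut to $|y|\le\gamma_*$ and identify the main term with $\E_\alpha(\widecheck{x/t})$ afterwards, while the paper first restricts $E_{\alpha,T}$ to the domain $|y+\tfrac{x}{t}|<1-|y|$ and compares integrands there), so no further comment is needed.
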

\begin{proof}
    We begin by showing that for $t$ large enough (depending on $\gamma$) and $|x|\leq \gamma t$ we have
    \begin{equation*}
        E_{\alpha,T}(t,x)=-\frac{1}{t^2}\int_{\substack{|y|\leq 1 \\ |y+\frac{x}{t}|< 1-|y|}} \int_{\R^3_v}t^3 W\left(\frac{y}{|y|},v\right)f^\alpha(t(1-|y|),ty+x,v)\mathrm{d}v\frac{\mathrm{d}y}{|y|^2}.
    \end{equation*}
    Write $t'=t(1-|y|),\,x'=t(y+\frac{x}{t})$. On the support of $f^\alpha$ we know that $|x'|\leq \widehat{\beta_\alpha}t' + k \leq \widehat{\beta}_{max}t'+k$. Thus, for $t'>\frac{k}{1-\widehat{\beta}_{max}}$, we have $t'>|x'|$. Now, on the support of $f^\alpha$ again we have 
    \begin{equation*}
        t'=t-|x'-x|\geq (1-\gamma)t-\widehat{\beta}_{max}t'-k,
    \end{equation*}
    leaving us with $t'\geq \frac{t(1-\gamma)}{1+\widehat{\beta}_{max}}-\frac{k}{1+\widehat{\beta}_{max}}$. Finally, for 
    \begin{equation*}
        t\geq T(\gamma):=\frac{2k}{(1-\gamma)(1+\widehat{\beta}_{max})}+1,
    \end{equation*}
    we derive $t'>\frac{k}{1-\widehat{\beta}_{max}}$ and thus $t'>|x'|$.\\
    Consider now 
    \begin{align}
        \label{equation_definition_mathcal_E}\mathcal{E}_T(y,t):=& t^3(1-|y|)^3\int_{\R^3_v}W\left(\frac{y}{|y|},v\right)f^\alpha(t(1-|y|),ty+x,v)\mathrm{d}v-\left[\langle\cdot\rangle^5 W\left(\frac{y}{|y|},\cdot\right)Q^\alpha_\infty\right]\left(\widecheck{\frac{y+\frac{x}{t}}{1-|y|}}\right),
    \end{align}
    so that 
    \begin{equation}
        t^2E_{\alpha,T}(t,x)-\E_\alpha\left(\widecheck{\frac{x}{t}}\right)= -\int_{\substack{|y|\leq 1 \\ |y+\frac{x}{t}|< 1-|y|}} \mathcal{E}_T(y,t) \frac{1}{(1-|y|)^3}\frac{\mathrm{d}y}{|y|^2}.
    \end{equation}
    Recall \eqref{equation_definition_mathcal_E}. Using the support of $f^\alpha$ and $Q^\alpha_\infty$, $\mathcal{E}_T(t,y)$ vanishes for $|y|> \frac{\gamma + \widehat{\beta}_{max}}{1+\widehat{\beta}_{max}}+ \frac{k}{(1+\widehat{\beta}_{max})t}$. Thus, for \mbox{$t\geq T(\gamma)\geq\frac{2k}{(1-\gamma)(1+\widehat{\beta}_{max})}$}  we have, on the support of $\mathcal{E}_T(t,\cdot)$,
    \begin{equation*}
        1-|y|\geq \frac{1}{2}\left(\frac{1-\gamma}{1+\widehat{\beta}_{max}}\right)=:K(\gamma)>0.
    \end{equation*}
    
    We now apply Proposition \ref{proposition_lien_f_et_Q_infini} with $h(v)=W(\frac{y}{|y|},v)$. Since $W\in C^1(\mathbb{S}^2\times \R^3)$ we derive 
    \begin{equation*}
         \int_{\substack{|y|\leq 1 \\ |y+\frac{x}{t}|< 1-|y|}}\left|\mathcal{E}_T(y,t)\right|\frac{1}{(1-|y|)^3} \frac{\mathrm{d}y}{|y|^2} \lesssim K(\gamma)^{-4}\int_{\substack{|y|\leq 1 \\ |y+\frac{x}{t}|< 1-|y|}}\frac{\log^6(2+t(1-|y|))}{2+t}\frac{\mathrm{d}y}{|y|^2}\lesssim\frac{\log^6(2+t)}{2+t}K(\gamma)^{-4},
    \end{equation*}
    which concludes the proof.
\end{proof}
Before giving the final estimate, we define the asymptotic fields $\E$ and $\mathbb{B}$ by
\begin{equation*}
    \E:=\sum_{1\leq\alpha\leq N} e_\alpha m_\alpha^3\E_\alpha,\qquad \mathbb{B}:=\sum_{1\leq\alpha\leq N} e_\alpha m_\alpha^3\mathbb{B}_\alpha.
\end{equation*}
\begin{corollary}
    \label{corollaire_estimee_E_E_bb}
    Let $0<\gamma<1,\, t\geq T(\gamma)$ and $|x|\leq \gamma t$. We have the following estimate
    \begin{equation}
        \left|t^2E(t,x)-\E\left(\widecheck{\frac{x}{t}}\right)\right|+\left|t^2B(t,x)-\mathbb{B}\left(\widecheck{\frac{x}{t}}\right)\right|\lesssim \frac{\log^6(2+t)}{2+t} C(\gamma).
    \end{equation}
    where $C(\gamma)$ is a constant depending only on $\gamma$ and $k$.
\end{corollary}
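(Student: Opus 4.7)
The plan is to combine the Glassey-Strauss decomposition $E = E_T + E_S + E_{data}$ from Proposition \ref{proposition_decomp_glassey_strauss} with the three estimates already proved for its components. Using linearity $E_T = \sum_\alpha e_\alpha m_\alpha^3 E_{\alpha,T}$ and the definition $\E = \sum_\alpha e_\alpha m_\alpha^3 \E_\alpha$, the triangle inequality gives
\begin{equation*}
\left|t^2 E(t,x) - \E\left(\widecheck{x/t}\right)\right| \leq \sum_{\alpha} |e_\alpha| m_\alpha^3 \left|t^2 E_{\alpha,T}(t,x) - \E_\alpha\left(\widecheck{x/t}\right)\right| + t^2 |E_S(t,x)| + t^2 |E_{data}(t,x)|,
\end{equation*}
which reduces the problem to bounding each of the three terms on the right-hand side in the region $\{t \geq T(\gamma),\, |x| \leq \gamma t\}$.

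For the $E_T$-piece, Proposition \ref{proposition_estimee_E_T} applies directly and, after summing over $\alpha$, delivers the announced bound. For the $E_{data}$-piece, Proposition \ref{proposition_estimee_E_data} localizes its support to $\{|t-|x|| \leq k\}$; since $|x| \leq \gamma t$ gives $t - |x| \geq (1-\gamma) t$, and since $T(\gamma) = \frac{2k}{(1-\gamma)(1 + \widehat{\beta}_{max})} + 1 > \frac{k}{1-\gamma}$ (using $\widehat{\beta}_{max} < 1$) implies $t(1-\gamma) > k$, I conclude $E_{data}(t,x) = 0$ on the entire region of interest. For the $E_S$-piece, Proposition \ref{proposition_estimee_E_S} together with $t + |x| + 2k \geq t$ and $t - |x| + 2k \geq (1-\gamma) t$ yields $t^2 |E_S(t,x)| \lesssim (1-\gamma)^{-2} (2+t)^{-1}$, which is strictly stronger than the target bound.

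The magnetic field $B$ is handled identically, invoking the analogous decomposition $B = B_T + B_S + B_{data}$ and the corresponding estimates for $B_S$, $B_{data}$ and $B_{\alpha,T}$ mentioned in the remark following Proposition \ref{proposition_decomp_glassey_strauss}. I do not anticipate any real obstacle here: all the hard analytic work has been done in the preceding propositions, and the corollary amounts to a clean bookkeeping of the three contributions. The only point requiring a bit of care is the tracking of the $\gamma$-dependence throughout, in particular the factors $(1-\gamma)^{-j}$ arising from $t - |x| + 2k \geq (1-\gamma)t$ and from the constant $I(\gamma)$ in Proposition \ref{proposition_estimee_E_T}, so that the final constant $C(\gamma)$ depends only on $\gamma$ and $k$.
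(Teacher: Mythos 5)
Your proposal is correct and follows essentially the same route as the paper: the Glassey--Strauss decomposition $E=\sum_\alpha e_\alpha m_\alpha^3 E_{\alpha,T}+E_S+E_{data}$ combined with Propositions \ref{proposition_estimee_E_data}, \ref{proposition_estimee_E_S} and \ref{proposition_estimee_E_T}, and the same treatment of $B$. Your explicit verifications that $E_{data}$ vanishes on the region $\{t\geq T(\gamma),\,|x|\leq\gamma t\}$ and that $t^2|E_S|\lesssim(1-\gamma)^{-2}(2+t)^{-1}$ are exactly the bookkeeping the paper leaves implicit.
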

\begin{proof}
    This follows from the decomposition $E=E_S+E_{data}+\sum_\alpha E_{\alpha,T}$ and Propositions \ref{proposition_estimee_E_data}--\ref{proposition_estimee_E_S} and \ref{proposition_estimee_E_T}.
\end{proof}


\subsection{Proof of the modified scattering theorem}

In this subsection, we finish the proof of Theorem \ref{theoreme_principal_scattering_modifie}. First, let us detail two preliminary results.

\begin{proposition}
    \label{proposition_estimee_finale_E_E_bb}
    For any $\alpha$, all $t\geq 0,\,|v|\leq\beta_\alpha$ and all $x$ in the support of $g^\alpha(t,\cdot)$, i.e.  $|x|\leq C\log(2+t)$, we have  
    \begin{equation}
        |t^2 E(t,x+t\widehat{v})-\E(v)|+|t^2 B(t,x+t\widehat{v})-\mathbb{B}(v)|\lesssim \frac{\log^6(2+t)}{2+t}.
    \end{equation}
\end{proposition}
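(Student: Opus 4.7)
The plan is to invoke Corollary \ref{corollaire_estimee_E_E_bb} at the point $(t,x+t\widehat v)$ and then quantify the error introduced by replacing the argument $\widecheck{\widehat v+x/t}$ of $\E,\mathbb B$ by $v=\widecheck{\widehat v}$. For $t$ bounded, the estimate is trivial since $(E,B)$ is bounded on compact sets and $\E,\mathbb B$ are continuous. For $t$ large, the bounds $|\widehat v|\le \widehat{\beta_\alpha}\le \widehat{\beta}_{max}<1$ and $|x|\lesssim \log(2+t)$ imply $|x+t\widehat v|\le \gamma t$ for any fixed $\gamma>\widehat{\beta}_{max}$, so Corollary \ref{corollaire_estimee_E_E_bb} applied at $y=x+t\widehat v$ directly yields
\begin{equation*}
  \bigl|t^2 E(t,x+t\widehat v)-\E\bigl(\widecheck{\widehat v+x/t}\bigr)\bigr|+\bigl|t^2 B(t,x+t\widehat v)-\mathbb B\bigl(\widecheck{\widehat v+x/t}\bigr)\bigr|\lesssim \frac{\log^6(2+t)}{2+t}.
\end{equation*}

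The remaining task is to bound $|\E(\widecheck{\widehat v+x/t})-\E(v)|+|\mathbb B(\widecheck{\widehat v+x/t})-\mathbb B(v)|$ by $\log^6(2+t)/(2+t)$. Since $\widecheck{\cdot}$ is smooth on $\{|u|\le\widehat{\beta}_{max}\}$, one has $|\widecheck{\widehat v+x/t}-v|\lesssim |x|/t\lesssim \log(2+t)/(2+t)$, so I need a quantitative modulus of continuity for $\E,\mathbb B$ at scale $\log(t)/t$ with target rate $\log^6(t)/t$. This is the main difficulty: $Q_\infty^\alpha$ is only known to be continuous (as a uniform limit of continuous functions), and one cannot differentiate under the integral sign in the definition of $\E_\alpha,\mathbb B_\alpha$ without additional regularity.

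The key observation is that $Q_\infty^\alpha$ enjoys a quasi-Lipschitz bound obtained by interpolating between Propositions \ref{corollaire_estimee_Q_infini} and \ref{proposition_estimee_gradv_g}. Indeed, since $g^\alpha(t,\cdot,v)$ is supported in $\{|x|\lesssim \log(2+t)\}$ by Proposition \ref{proposition_support_g} and satisfies $|\nabla_v g^\alpha|\lesssim \log^2(2+t)$, setting $q^\alpha_{t_0}(v):=\int_{\R^3_x}g^\alpha(t_0,x,v)\,\mathrm{d}x$ gives $|\partial_v q^\alpha_{t_0}|\lesssim \log^5(2+t_0)$. Combined with the uniform bound $|Q_\infty^\alpha-q^\alpha_{t_0}|\lesssim \log^5(2+t_0)/(2+t_0)$ from Proposition \ref{corollaire_estimee_Q_infini} and the mean value inequality for $q^\alpha_{t_0}$, the triangle inequality yields
\begin{equation*}
 |Q_\infty^\alpha(v')-Q_\infty^\alpha(v)|\lesssim \frac{\log^5(2+t_0)}{2+t_0}+|v'-v|\,\log^5(2+t_0),\qquad t_0\ge 0,\ v,v'\in\R^3_v.
\end{equation*}
Choosing $t_0=t$ (the time from the statement) and using $|v'-v|\lesssim \log(2+t)/(2+t)$ gives the quasi-Lipschitz estimate $|Q_\infty^\alpha(v')-Q_\infty^\alpha(v)|\lesssim \log^6(2+t)/(2+t)$.

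To finish, I insert this bound into the integrals defining $\E_\alpha$ and $\mathbb B_\alpha$. On the effective support $\{|y+\widehat v|\le \widehat{\beta_\alpha}(1-|y|)\}$ enforced by $\supp Q_\infty^\alpha$, one has $1-|y|\ge (1-\widehat{\beta_\alpha})/(1+\widehat{\beta_\alpha})>0$ and $(y+\widehat v)/(1-|y|)$ stays in a compact subset of $\{|u|<1\}$, where $\widecheck{\cdot}$, $W$ and $\mathcal W$ are smooth; the factor $|y|^{-2}$ is integrable in three dimensions. The difference of integrands for $v$ versus $v'=\widecheck{\widehat v+x/t}$ is therefore controlled pointwise by the quasi-Lipschitz bound on $Q_\infty^\alpha$ applied to the two arguments $\widecheck{(y+\widehat v)/(1-|y|)}$ and $\widecheck{(y+\widehat{v'})/(1-|y|)}$, producing an $O(\log^6(2+t)/(2+t))$ contribution after integration in $y$. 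The symmetric difference of the two effective supports has measure $O(|x|/t)$ and on it $Q_\infty^\alpha$ is evaluated near the boundary of its support where it vanishes, giving a lower-order contribution. Summing over $\alpha$ and combining with the first step concludes the proof.
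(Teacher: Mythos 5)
Your argument is correct, but it follows a genuinely different route from the paper. The paper sidesteps any continuity analysis of $\E,\mathbb{B}$: it applies Corollary \ref{corollaire_estimee_E_E_bb} only at the point $t\widehat{v}$, where the argument of $\E$ is exactly $\widecheck{\widehat{v}}=v$, and then bounds $t^2|E(t,x+t\widehat{v})-E(t,t\widehat{v})|$ by the mean value theorem together with the gradient decay \eqref{equation_estimee_gradEB_Glassey}, which on the segment $[t\widehat{v},x+t\widehat{v}]$ (contained in $\{|y|\leq\frac{1+\widehat{\beta}_{max}}{2}t\}$ for $t$ large) gives $|\nabla_x E(t,\cdot)|\lesssim\log(2+t)(2+t)^{-3}$ and hence an error $\lesssim\log^2(2+t)(2+t)^{-1}$. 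You instead apply the corollary at $x+t\widehat{v}$ and must compare $\E(\widecheck{\widehat{v}+x/t})$ with $\E(v)$; your key contribution is the quasi-Lipschitz bound for $Q^\alpha_\infty$ obtained by interpolating the convergence rate of Proposition \ref{corollaire_estimee_Q_infini} against $|\nabla_v\int g^\alpha\,\mathrm{d}x|\lesssim\log^5(2+t)$, which is correct and then transfers to $\E_\alpha,\mathbb{B}_\alpha$ since $1-|y|$ is bounded below and $W,\mathcal{W},\widecheck{\cdot}$ are smooth on the effective support. The only loose point is your treatment of the symmetric difference of the two effective supports: ``measure $O(|x|/t)$'' plus ``$Q^\alpha_\infty$ vanishes near the boundary'' is not by itself quantitative, but your own quasi-Lipschitz estimate repairs it immediately (on that set one of the two evaluations of $Q^\alpha_\infty$ is zero and the other is at distance $O(|x|/t)$ from a zero, hence of size $O(\log^6(2+t)/(2+t))$, and $\int|y|^{-2}\mathrm{d}y$ over the bounded region is $O(1)$). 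In comparison, the paper's proof is shorter and avoids the $v$-dependence of the domain and argument in the integrals defining $\E_\alpha,\mathbb{B}_\alpha$, while your route buys a genuine modulus of continuity for $Q^\alpha_\infty$ and for $\E,\mathbb{B}$, which the paper does not establish and instead re-derives field-side comparisons where needed (e.g.\ in the proof of Proposition \ref{proposition_h_support_compact}).
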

\begin{proof} Using Corollary \ref{corollaire_estimee_E_E_bb} we already know that for $t\geq T(\widehat{\beta}_{max})$, since $|\widehat{v}|\leq \widehat{\beta}_{max}$ 
\begin{equation*}
    |t^2 E(t,t\widehat{v})-\E(v)|\lesssim \frac{\log^6(2+t)}{2+t}C(\widehat{\beta}_{max})\lesssim \frac{\log^6(2+t)}{2+t},
\end{equation*}
where here we omit the constant $C(\widehat{\beta}_{max})$ since it only depends on $k$ and $C_0$. Now we only need to prove that 
\begin{equation}
    \label{equation_preuve2_estimee_E}
    |t^2 E(t,x+t\widehat{v})-t^2 E(t,t\widehat{v})|\lesssim \frac{\log^2(2+t)}{2+t}.
\end{equation}
We will prove the above inequality using the mean value theorem. For this we need to consider $y=\lambda (x+t\widehat{v})+(1-\lambda)t\widehat{v}\in[t\widehat{v},x+t\widehat{v}]$ with $\lambda\in[0,1]$, so that $y=\lambda x +t\widehat{v}$. Using \eqref{equation_estimee_gradEB_Glassey} we obtain 
\begin{equation*}
    |\nabla_x E(t,y)|\lesssim \frac{\log(t+|y|+2k)}{(t+|y|+2k)(t-|y|+2k)^2}.
\end{equation*}
Consider $t\geq T_1$ large enough so we have $\frac{C\log(2+t)}{t}\leq \frac{1-\widehat{\beta}_{max}}{2}$. This implies $|y|\leq \frac{1+\widehat{\beta}_{max}}{2}t$ and then $t-|y|+2k\gtrsim 2+t$ as well as $\log(t+|y|+2k)\lesssim \log(2+t)$. This grants us 
\begin{equation*}
    |\nabla_x E(t,y)|\lesssim \frac{\log(2+t)}{(2+t)^3}.
\end{equation*}
Now, the mean value theorem and $|x|\lesssim \log(2+t)$ allow us to derive \eqref{equation_preuve2_estimee_E} and obtain the result for $t\geq T_1$. It also holds on the compact interval of time $[0,T_1]$ since $E$ and $\E$ are bounded.
\end{proof}
Now, recall \eqref{equation_satisfaite_par_g^alpha}. Thanks to the estimate \eqref{equation_estimee_EB_Glassey} and Propositions \ref{proposition_estimee_gradv_g} and \ref{proposition_estimee_finale_E_E_bb}, we have 
\begin{align}
    \partial_t g^\alpha(t,x,v)=&-\frac{t}{v^0}\frac{e_\alpha}{m_\alpha}\Big[\widehat{v}(L(t,x+t\widehat{v},v)\cdot\widehat{v})-L(t,x+t\widehat{v},v)\Big]\cdot\nabla_x g^\alpha(t,x,v)+O\left(\frac{\log^2(2+t)}{(2+t)^2}\right)\nonumber\\
    =&-\frac{1}{tv^0}\frac{e_\alpha}{m_\alpha}\Big[\widehat{v}(\mathbb{L}(v)\cdot\widehat{v})-\mathbb{L}(v)\Big]\cdot\nabla_x g^\alpha(t,x,v)+O\left(\frac{\log^6(2+t)}{(2+t)^2}\right),\label{equation_finale_derivee_en_temps_g_alpha}
\end{align}
where $\mathbb{L}(v):=\E(v)+\widehat{v}\times\mathbb{B}(v)$ is the asymptotic Lorentz force. We observe that the first term on the right-hand side is of order $t^{-1}$, which is not integrable. We thus consider corrections to the linear characteristics to cancel it. We define the following corrections 
\begin{equation}
    \mathcal{D}_v:=-\widehat{v}\cdot \mathbb{L}(v),\quad \mathcal{C}_v:=-\mathbb{L}(v).
\end{equation}
Now, let us consider 
\begin{align}
    h^\alpha(t,x,v):=&f^\alpha\left(t,x+t\widehat{v}+\frac{e_\alpha}{m_\alpha v^0}\log(t)(\mathcal{C}_v-\widehat{v}\mathcal{D}_v),v~\right)~=g^\alpha\left(t,x+\frac{e_\alpha}{m_\alpha v^0}\log(t)(\mathcal{C}_v-\widehat{v}\mathcal{D}_v),v\right),\\
    h_\alpha(t,x,v):=&f_\alpha\left(t,x+t\widehat{v_\alpha}+\frac{e_\alpha}{v^0_\alpha}\log(t)(\mathcal{C}_{v_\alpha}-\widehat{v_\alpha}\mathcal{D}_{v_\alpha}),v\right)=g_\alpha\left(t,x+\frac{e_\alpha}{v^0_\alpha}\log(t)(\mathcal{C}_{v_\alpha}-\widehat{v_\alpha}\mathcal{D}_{v_\alpha}),v\right).
\end{align}
\begin{proposition}
    For any $\alpha$, all $t\geq 0$ and all $(x,v)\in\R^3_x\times\R^3_v$, we have 
    \begin{equation}
    \left|f_\alpha\left(t,x+t\widehat{v_\alpha}+\frac{e_\alpha}{v^0_\alpha}\log(t)(\mathcal{C}_{v_\alpha}-\widehat{v_\alpha}\mathcal{D}_{v_\alpha}),v\right)-f_{\alpha\infty}(x,v)\right|\lesssim \frac{\log^6(2+t)}{(2+t)}.
    \end{equation}
\end{proposition}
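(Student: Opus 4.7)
The definitions of $\mathcal{C}_v$ and $\mathcal{D}_v$ have been engineered so that the shift $\frac{e_\alpha\log(t)}{m_\alpha v^0}(\mathcal{C}_v-\widehat{v}\mathcal{D}_v)$ exactly cancels the non-integrable $O(1/t)$ contribution in $\partial_t g^\alpha$ that appears in \eqref{equation_finale_derivee_en_temps_g_alpha}. The strategy is to differentiate $h^\alpha(t,x,v)=g^\alpha\left(t,y(t,x,v),v\right)$, where $y(t,x,v):=x+\frac{e_\alpha\log(t)}{m_\alpha v^0}(\mathcal{C}_v-\widehat{v}\mathcal{D}_v)$, with respect to $t$, observe that after this cancellation only an $L^1_t$ remainder survives, and integrate to obtain convergence of $h^\alpha(t,\cdot,\cdot)$ as $t\to +\infty$. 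The statement for $h_\alpha$ follows at once through the identity $h_\alpha(t,x,v)=h^\alpha(t,x,v/m_\alpha)$.

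Applying the chain rule and inserting \eqref{equation_finale_derivee_en_temps_g_alpha} evaluated at the shifted position $(t,y,v)$ yields
\begin{equation*}
\partial_t h^\alpha(t,x,v)=\left(-\frac{e_\alpha}{m_\alpha tv^0}\bigl[\widehat{v}(\mathbb{L}(v)\cdot\widehat{v})-\mathbb{L}(v)\bigr]+\frac{e_\alpha}{m_\alpha tv^0}(\mathcal{C}_v-\widehat{v}\mathcal{D}_v)\right)\cdot(\nabla_x g^\alpha)(t,y,v)+O\!\left(\frac{\log^6(2+t)}{(2+t)^2}\right).
\end{equation*}
From the definitions $\mathcal{C}_v=-\mathbb{L}(v)$ and $\mathcal{D}_v=-\widehat{v}\cdot\mathbb{L}(v)$, one reads off the key algebraic identity
\begin{equation*}
\widehat{v}(\mathbb{L}(v)\cdot\widehat{v})-\mathbb{L}(v)=-\widehat{v}\mathcal{D}_v+\mathcal{C}_v=\mathcal{C}_v-\widehat{v}\mathcal{D}_v,
\end{equation*}
so the two $O(1/t)$ contributions cancel exactly and only the remainder survives:
\begin{equation*}
|\partial_t h^\alpha(t,x,v)|\lesssim \frac{\log^6(2+t)}{(2+t)^2},
\end{equation*}
uniformly in $(x,v)$, since the implicit constants in Propositions \ref{proposition_estimee_gradv_g} and \ref{proposition_estimee_finale_E_E_bb} are uniform on the support.

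Because $s\mapsto \log^6(2+s)/(2+s)^2$ is integrable at infinity with $\int_t^\infty\log^6(2+s)/(2+s)^2\,\mathrm{d}s\lesssim \log^6(2+t)/(2+t)$, the family $h^\alpha(t,\cdot,\cdot)$ is Cauchy in $L^\infty(\R^3_x\times\R^3_v)$ as $t\to +\infty$. Uniform convergence of continuous functions produces a continuous limit $f^\alpha_\infty$ satisfying
\begin{equation*}
|h^\alpha(t,x,v)-f^\alpha_\infty(x,v)|\lesssim \frac{\log^6(2+t)}{2+t},
\end{equation*}
and one then sets $f_{\alpha\infty}(x,v):=f^\alpha_\infty(x,v/m_\alpha)$ to recover the stated bound for $h_\alpha$. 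For small times (where the $\log(t)$ shift is negative but bounded on any interval bounded away from $0$), the inequality is absorbed into the implicit constant using the uniform bound $|f_\alpha|\leq \|f_{\alpha0}\|_\infty$.

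\textbf{Main obstacle.} The only non-mechanical step is identifying the correction $\mathcal{C}_v-\widehat{v}\mathcal{D}_v$ with the coefficient of $\nabla_x g^\alpha$ that arises once the asymptotic field $\mathbb{L}(v)$ is inserted into \eqref{equation_satisfaite_par_g^alpha}; this is precisely what motivates the definitions of $\mathcal{C}_v$ and $\mathcal{D}_v$. Once that identification is made the proof reduces to a one-line time integration, and the compactness of the support of $f_{\alpha\infty}$ stated in the theorem is deferred to Proposition \ref{proposition_h_support_compact}.
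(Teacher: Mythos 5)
Your proposal is correct and follows essentially the same route as the paper: differentiate $h^\alpha(t,x,v)=g^\alpha\big(t,x+\tfrac{e_\alpha}{m_\alpha v^0}\log(t)(\mathcal{C}_v-\widehat{v}\mathcal{D}_v),v\big)$ by the chain rule, use \eqref{equation_finale_derivee_en_temps_g_alpha} together with the identity $\mathcal{C}_v-\widehat{v}\mathcal{D}_v=\widehat{v}(\mathbb{L}(v)\cdot\widehat{v})-\mathbb{L}(v)$ to cancel the non-integrable $t^{-1}$ term, and integrate the remaining $O\big(\log^6(2+t)(2+t)^{-2}\big)$ in time, then pass from $f^\alpha$ to $f_\alpha$ via $v\mapsto v/m_\alpha$. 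The tail estimate, the continuity of the limit, and the remark on small times are consistent with (and no weaker than) the paper's argument.
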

\begin{proof}
    We directly have, thanks to the above equation \eqref{equation_finale_derivee_en_temps_g_alpha} on $\partial_t g^\alpha$,
    \begin{align*}
        \partial_t h^\alpha(t,x,v)=&\frac{1}{t}\frac{e_\alpha}{m_\alpha v^0}(\mathcal{C}_v-\widehat{v}\mathcal{D}_v)\cdot(\nabla_x g^\alpha)\left(t,x+\frac{e_\alpha}{m_\alpha v^0}\log(t)(\mathcal{C}_v-\widehat{v}\mathcal{D}_v),v\right)+(\partial_t g^\alpha)\left(t,x+\frac{e_\alpha}{m_\alpha v^0}\log(t)(\mathcal{C}_v-\widehat{v}\mathcal{D}_v),v\right)\\
        =& \frac{1}{t}\frac{e_\alpha}{m_\alpha v^0} \Big[(\mathcal{C}_v-\widehat{v}\mathcal{D}_v)-\left(\widehat{v}(\mathbb{L}(v)\cdot\widehat{v})-\mathbb{L}(v)\right)\Big]\cdot\nabla_x g^\alpha+O\left(\frac{\log^6(2+t)}{(2+t)^2}\right)\\
        =&O\left(\frac{\log^6(2+t)}{(2+t)^2}\right).
    \end{align*}
    Consequently, there exists ${f^\alpha}_\infty\in C^0(\R^3_x\times\R^3_v)$ such that
    \begin{equation}
        \left|f^\alpha\left(t,x+t\widehat{v}+\frac{e_\alpha}{m_\alpha v^0}\log(t)(\mathcal{C}_v-\widehat{v}\mathcal{D}_v),v\right)-{f^\alpha}_\infty(x,v)\right|\lesssim \frac{\log^6(2+t)}{(2+t)}.
    \end{equation}
    This directly implies the result, with $f_{\alpha\infty}(x,v):={f^\alpha}_\infty(x,v_\alpha)$, where we recall $v^0_\alpha=\sqrt{m_\alpha^2+|v|^2}$ and $v_\alpha=\frac{v}{m_\alpha}$.
\end{proof}
It remains to prove the estimate of Theorem \ref{theoreme_principal_scattering_modifie}. For this, write $\widetilde{\mathcal{C}}_v=\mathcal{C}_{v}-\widehat{v}\mathcal{D}_{v}$ and let
\begin{align*}
    h_\alpha(t,x,v)&=f_\alpha\left(t,x+t\widehat{v_\alpha}+\frac{e_\alpha}{v^0_\alpha}\log(t)\widetilde{\mathcal{C}}_{v_\alpha},v\right),\\
    \widetilde{h}_\alpha(t,x,v)&:=f_\alpha\left(tv^0_\alpha+\frac{e_\alpha}{v^0_\alpha}\log(t)\mathcal{D}_{v_\alpha},x+tv+\frac{e_\alpha}{v^0_\alpha}\log(t)\mathcal{C}_{v_\alpha},v\right).
\end{align*}

\begin{remark}
    Notice that for $\widetilde{h}_\alpha$ to be well-defined, one needs to have $tv^0_\alpha+\dfrac{e_\alpha}{v^0_\alpha}\log(t)\mathcal{D}_{v_\alpha}\geq 0$. Since $|v|\leq \beta$, this holds whenever $t$ is large enough.
\end{remark}

 We already know that 
\begin{equation*}
    |h_\alpha(tv^0_\alpha,x,v)-f_{\alpha\infty}(x,v)|\lesssim \frac{\log^6(2+t)}{2+t}.
\end{equation*}
Moreover,
\begin{align*}
    \widetilde{h}_\alpha(t,x,v)&=f_\alpha\left(tv^0_\alpha+\frac{e_\alpha}{v^0_\alpha}\log(t)\mathcal{D}_{v_\alpha},x+(tv^0_\alpha+\frac{e_\alpha}{v^0_\alpha}\log(t)\mathcal{D}_{v_\alpha})\widehat{v_\alpha}+\frac{e_\alpha}{v^0_\alpha}\log(t)\widetilde{\mathcal{C}}_{v_\alpha},v\right)\\
    &=g_\alpha\left(tv^0_\alpha+\frac{e_\alpha}{v^0_\alpha}\log(t)\mathcal{D}_{v_\alpha},x+\frac{e_\alpha}{v^0_\alpha}\log(t)\widetilde{\mathcal{C}}_{v_\alpha},v\right).
\end{align*}
Now we know that, thanks to \eqref{equation_estimee_EB_Glassey} and Proposition \ref{proposition_estimee_gradv_g}, $|\partial_t g_\alpha(t,\cdot)|\lesssim \frac{1}{2+t}$. Hence, by the mean value theorem, we derive, for $t$ large enough, 
\begin{align*}
    \widetilde{h}_\alpha(t,x,v)&=g_\alpha\left(tv^0_\alpha,x+\frac{e_\alpha}{v^0_\alpha}\log(t)\widetilde{\mathcal{C}}_{v_\alpha},v\right)+O\left(\frac{\log(2+t)}{2+t}\right).
\end{align*}
Finally, using the expression of $h_\alpha$, we obtain
\begin{align*}
    \widetilde{h}_\alpha(t,x,v)&=g_\alpha\left(tv^0_\alpha,x-\frac{e_\alpha}{v^0_\alpha}\log(v^0_\alpha)\widetilde{\mathcal{C}}_{v_\alpha}+\frac{e_\alpha}{v^0_\alpha}\log(tv^0_\alpha)\widetilde{\mathcal{C}}_{v_\alpha},v\right)+O\left(\frac{\log(2+t)}{2+t}\right)\\
    &=h_\alpha\left(tv^0_\alpha,x-\frac{e_\alpha}{v^0_\alpha}\log(v^0_\alpha)\widetilde{\mathcal{C}}_{v_\alpha},v\right)+O\left(\frac{\log(2+t)}{2+t}\right)\\
    &=f_{\alpha\infty}\left(x-\frac{e_\alpha}{v^0_\alpha}\log(v^0_\alpha)\widetilde{\mathcal{C}}_{v_\alpha},v\right)+O\left(\frac{\log^6(2+t)}{2+t}\right).
\end{align*}
So we derive the estimate of Theorem \ref{theoreme_principal_scattering_modifie} by taking $\widetilde{f}_{\alpha\infty}(x,v)=f_{\alpha\infty}\left(x-\dfrac{e_\alpha}{v^0_\alpha}\log(v^0_\alpha)\widetilde{\mathcal{C}}_{v_\alpha},v\right)$. However, to prove Theorem \ref{theoreme_principal_scattering_modifie}, it remains to show that $\widetilde{f}_{\alpha\infty}$ has a compact support.

\begin{proposition}
    \label{proposition_h_support_compact}
    There exists $\mathcal{T}> 0$ and a constant $C>0$ such that, for all $t\geq \mathcal{T}$ 
    \begin{equation}
        \supp h^\alpha(t,\cdot) \subset \{ (x,v)\in\R^3_x\times\R^3_v,\, |x|\leq C,\, |v|\leq C\},
    \end{equation}
    i.e. $h^\alpha(t,\cdot)$ is compactly supported, uniformly in $t$.
\end{proposition}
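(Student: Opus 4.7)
The plan is to use the characteristic flow \eqref{ODE_characteristics_g} of $g^\alpha$ together with the asymptotic expansion of the electromagnetic field from Proposition \ref{proposition_estimee_finale_E_E_bb}. Fix $(x,v) \in \supp h^\alpha(t, \cdot)$; unrolling the definition of $h^\alpha$, the point $\big(x + \log(t)\, A(v),\, v\big)$ lies in $\supp g^\alpha(t, \cdot)$, where I set
\[A(v) := \frac{e_\alpha}{m_\alpha v^0}\big(\mathcal{C}_v - \widehat{v}\mathcal{D}_v\big).\]
Since $g^\alpha$ is constant along \eqref{ODE_characteristics_g} and $\supp g^\alpha(0, \cdot) \subset \{|x| \leq k,\, |v| \leq k_\alpha\}$ is compact, there exist $(x_0, v_0)$ in this set with $x + \log(t) A(v) = \mathcal{X}(t; x_0, v_0)$ and $v = \mathcal{V}(t; x_0, v_0)$. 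The bound $|v| \leq \beta_\alpha$ follows from Lemma \ref{lemme_support_f^alpha_en_x}, so the task reduces to showing that
\[x = \mathcal{X}(t) - \log(t)\, A(\mathcal{V}(t))\]
stays bounded uniformly for $t \geq \mathcal{T}$ and $(x_0, v_0) \in \supp g^\alpha(0, \cdot)$.

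To extract the logarithmic behavior of $\mathcal{X}(t)$, I would first note that $|\dot{\mathcal{V}}(s)| \lesssim (2+s)^{-2}$, by \eqref{equation_estimee_EB_Glassey} and Lemma \ref{lemme_support_f^alpha_en_x}, so $\mathcal{V}(s)$ converges to some $V_\infty = V_\infty(x_0, v_0)$ with $|\mathcal{V}(s) - V_\infty| \lesssim (2+s)^{-1}$. Proposition \ref{proposition_estimee_finale_E_E_bb} applies along the characteristic since $|\mathcal{X}(s)| \lesssim \log(2+s)$ by Proposition \ref{proposition_support_g}, and combined with the identity $\widehat{v}(\mathbb{L}(v)\cdot\widehat{v}) - \mathbb{L}(v) = \mathcal{C}_v - \widehat{v}\mathcal{D}_v$ it reduces \eqref{ODE_characteristics_g} to
\[\dot{\mathcal{X}}(s) = \frac{A(\mathcal{V}(s))}{s} + O\!\left(\frac{\log^6(2+s)}{(2+s)^2}\right).\]
Integrating from $\mathcal{T}$ to $t$ and splitting $A(\mathcal{V}(s)) = A(V_\infty) + \big(A(\mathcal{V}(s)) - A(V_\infty)\big)$ then yields
\[x = \log(t)\big(A(V_\infty) - A(\mathcal{V}(t))\big) + \int_{\mathcal{T}}^{t} \frac{A(\mathcal{V}(s)) - A(V_\infty)}{s}\, ds + O(1).\]

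The main obstacle is bounding both remaining terms uniformly, as each requires a quantitative modulus of continuity on $A$ (hence on $\E$ and $\mathbb{B}$), stronger than the bare $C^0$ regularity established so far. The plan is to prove that $Q^\alpha_\infty$ is log-Lipschitz on $\{|v| \leq \beta_\alpha\}$, namely
\[|Q^\alpha_\infty(v_1) - Q^\alpha_\infty(v_2)| \lesssim |v_1 - v_2|\, \log^5\!\big(2 + 1/|v_1 - v_2|\big),\]
by interpolating between the uniform convergence rate $\log^5(2+t)/(2+t)$ of Proposition \ref{corollaire_estimee_Q_infini} and the crude Lipschitz bound $\big|\nabla_v \int_{\R^3_x} g^\alpha(t,x,v)\, dx\big| \lesssim \log^5(2+t)$, which is a direct consequence of Propositions \ref{proposition_support_g} and \ref{proposition_estimee_gradv_g}: the triangle inequality and optimization at $t \sim 1/|v_1-v_2|$ produce the stated estimate. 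The maps $v \mapsto \widecheck{y+\widehat{v}}/(1-|y|)$ and the weights $\langle\cdot\rangle^5 W(y/|y|,\cdot)$ in the integral representations of $\E_\alpha, \mathbb{B}_\alpha$ are smooth on the relevant compact domain, uniformly for $|v| \leq \beta_\alpha$, so this regularity transfers to $A$. Combined with $|\mathcal{V}(s) - V_\infty| \lesssim (2+s)^{-1}$, it yields $|A(\mathcal{V}(s)) - A(V_\infty)| \lesssim \log^5(2+s)/(2+s)$; the first term in the displayed formula for $x$ is then $O(\log^6(2+t)/(2+t))$, and the second is bounded by $\int_{\mathcal{T}}^{\infty} \log^5(2+s)/(2+s)^2\, ds < \infty$. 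Hence $|x| \lesssim 1$ uniformly in $t \geq \mathcal{T}$ and in $(x_0, v_0)$, completing the proof.
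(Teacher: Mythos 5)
Your plan is correct in its essentials and starts exactly as the paper does: characteristics of $g^\alpha$, the limit velocity with $|\mathcal{V}(s)-V_\infty|\lesssim(2+s)^{-1}$, and the reduction of \eqref{ODE_characteristics_g} to $\dot{\mathcal{X}}(s)=s^{-1}A(\mathcal{V}(s))+O\big(\log^6(2+s)(2+s)^{-2}\big)$ via Proposition \ref{proposition_estimee_finale_E_E_bb}. Where you genuinely diverge is in how the two leftover terms are controlled. The paper freezes the velocity at $v_\infty$ inside the Lorentz force \emph{before} invoking Proposition \ref{proposition_estimee_finale_E_E_bb} (this costs only $O(\log(2+s)(2+s)^{-2})$ by the mean value theorem, \eqref{equation_estimee_gradEB_Glassey} and $|\mathcal{V}(s)-v_\infty|\lesssim(2+s)^{-1}$), so the $1/s$ term integrates exactly to $\log(t)\,\frac{e_\alpha}{m_\alpha v_\infty^0}\widetilde{\mathcal{C}}_{v_\infty}$ and no remainder integral $\int s^{-1}\big(A(\mathcal{V}(s))-A(V_\infty)\big)\,\mathrm{d}s$ ever appears; the only continuity then needed is $\log(t)\,|\widetilde{\mathcal{C}}_{v_\infty}-\widetilde{\mathcal{C}}_{v}|\lesssim 1$, which the paper gets without any modulus-of-continuity lemma, by passing through the actual field at time $t$: $|\E(v_\infty)-\E(\mathcal{V}(t))|\le|\E(v_\infty)-t^2E(t,t\widehat{v_\infty})|+|\E(\mathcal{V}(t))-t^2E(t,t\widehat{\mathcal{V}}(t))|+t^2|E(t,t\widehat{v_\infty})-E(t,t\widehat{\mathcal{V}}(t))|\lesssim\log^6(2+t)/(2+t)$, the last term by \eqref{equation_estimee_gradEB_Glassey} and $t|\widehat{v_\infty}-\widehat{\mathcal{V}}(t)|\lesssim1$. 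You instead prove a quantitative log-Lipschitz modulus for $Q^\alpha_\infty$ by interpolating the rate in Proposition \ref{corollaire_estimee_Q_infini} against $|\nabla_v\int g^\alpha\,\mathrm{d}x|\lesssim\log^5(2+t)$ (that interpolation is correct, and combined with $|\mathcal{V}(s)-V_\infty|\lesssim(2+s)^{-1}$ it does make both your terms admissible), and transfer it to $\E,\mathbb{B}$ through their integral representations. This route works, but it is longer and the transfer step needs more than "the maps are smooth": the integration domain in $\E_\alpha,\mathbb{B}_\alpha$ depends on $v$ and the map $u\mapsto\widecheck{u}$ degenerates at the boundary $|u|=1$, so you must argue, as in the well-definedness proof, that the compact support of $Q^\alpha_\infty$ confines the nonzero integrand to a region where $1-|y|$ is bounded below uniformly for $|v|\le\beta_\alpha$; this makes the composed argument Lipschitz in $v$ there and annihilates the contribution of the symmetric difference of the domains. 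In short: your approach buys an explicit modulus of continuity for $Q^\alpha_\infty$, $\E$, $\mathbb{B}$, which is of independent interest; the paper's approach buys brevity, needing no regularity statement beyond Proposition \ref{proposition_estimee_finale_E_E_bb} and Hypothesis \ref{hypothese_suffisante}.
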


\begin{proof}
    Recall the expression of $h^\alpha$ 
    \begin{equation*}
        h^\alpha(t,x,v)=f^\alpha\left(t,x+t\widehat{v}+\frac{e_\alpha}{m_\alpha v^0}\log(t)(\mathcal{C}_v-\widehat{v}\mathcal{D}_v),v\right)=g^\alpha\left(t,x+\frac{e_\alpha}{m_\alpha v^0}\log(t)(\mathcal{C}_v-\widehat{v}\mathcal{D}_v),v\right),
    \end{equation*}
    and suppose $g^\alpha\left(t,x+\frac{e_\alpha}{m_\alpha v^0}\log(t)(\mathcal{C}_v-\widehat{v}\mathcal{D}_v),v\right)\neq 0$. Consider the characteristics 
    \begin{equation*}
        \mathcal{X}(s)=\mathcal{X}\left(s,t,x+\frac{e_\alpha}{m_\alpha v^0}\log(t)(\mathcal{C}_v-\widehat{v}\mathcal{D}_v),v\right),\quad \mathcal{V}(s)=\mathcal{V}\left(s,t,x+\frac{e_\alpha}{m_\alpha v^0}\log(t)(\mathcal{C}_v-\widehat{v}\mathcal{D}_v),v\right).
    \end{equation*}
    Now recall the ODE \eqref{ODE_characteristics_g} satisfied by $(\mathcal{X},\mathcal{V})$.  We have $|\dot{\mathcal{V}}(s)|\lesssim |L(s,\mathcal{X}(s)+s\widehat{\mathcal{V}}(s),\mathcal{V}(s))|\lesssim (s+2)^{-2}$, according to Lemma \ref{lemme_support_f^alpha_en_x} and \eqref{equation_estimee_EB_Glassey}. Thus, there exists $v_\infty\in\R^3_v$ such that 
    \begin{equation*}
        |\mathcal{V}(s)-v_\infty|\lesssim \frac{1}{2+s}.
    \end{equation*}
    Now, using \eqref{equation_estimee_EB_Glassey}--\eqref{equation_estimee_gradEB_Glassey}, the mean value theorem and the above equation, we derive
    \begin{align*}
        \dot{\mathcal{X}}(s)&=\dfrac{s}{\mathcal{V}^0(s)}\dfrac{e_\alpha}{m_\alpha}\left[\widehat{\mathcal{V}}(s)\left(L(s,\mathcal{X}(s)+s\widehat{\mathcal{V}}(s),\mathcal{V}(s))\cdot\widehat{\mathcal{V}}(s)\right)-L(s,\mathcal{X}(s)+s\widehat{\mathcal{V}}(s),\mathcal{V}(s))\right]\\
        &=\dfrac{s}{v_\infty^0}\dfrac{e_\alpha}{m_\alpha}\left[\widehat{v_\infty}\left(L(s,\mathcal{X}(s)+s\widehat{v_\infty},v_\infty)\cdot\widehat{v_\infty}\right)-L(s,\mathcal{X}(s)+s\widehat{v_\infty},v_\infty)\right]+O\left(\frac{\log(2+s)}{(2+s)^2}\right).
    \end{align*}
    This grants us, by applying Proposition \ref{proposition_estimee_finale_E_E_bb},
    \begin{align*}
        \dot{\mathcal{X}}(s) &=\dfrac{1}{sv_\infty^0}\dfrac{e_\alpha}{m_\alpha}\left[\widehat{v_\infty}\left(\mathbb{L}(v_\infty)\cdot\widehat{v_\infty}\right)-\mathbb{L}(v_\infty)\right]+O\left(\frac{\log^6(2+s)}{(2+s)^2}\right)\\
        &=\frac{1}{s}\frac{e_\alpha}{m_\alpha v^0_\infty}\widetilde{\mathcal{C}}_{v_\infty}+O\left(\frac{\log^6(2+s)}{(2+s)^2}\right).
    \end{align*}
    By integrating we derive, for $t\geq 1$, 
    \begin{equation*}
        \left|\mathcal{X}(t)-\frac{e_\alpha}{m_\alpha v^0_\infty}\log(t)\widetilde{\mathcal{C}}_{v_\infty}\right|\leq \mathcal{X}(1)+C.
    \end{equation*}
    Finally, one can notice that since $|\mathcal{V}(s)-v_\infty|\lesssim (2+s)^{-1}$ and $|\mathcal{V}(s)|\leq \beta_\alpha$, we have $|v_\infty|\leq \beta_\alpha$. Then, it yields
    \begin{align*}
        |\E(v_\infty)-\E(\mathcal{V}(t))|\leq &|\E(v_\infty)-t^2E(t,t\widehat{v_\infty})| + |\E(\mathcal{V}(t))-t^2E(t,t\widehat{\mathcal{V}}(t))| + t^2|E(t,t\widehat{v_\infty})-E(t,t\widehat{\mathcal{V}}(t))|\\
        \lesssim & \frac{\log^6(2+t)}{2+t}.
    \end{align*}
    By deriving the same estimate for $\mathbb{B}$, we obtain for $t$ large enough
    \begin{equation*}
        \log(t)|\widetilde{\mathcal{C}}_{v_\infty}-\widetilde{\mathcal{C}}_{\mathcal{V}(t)}|\lesssim 1.
    \end{equation*}
    So finally,
    \begin{equation*}
        \left|\mathcal{X}(t)-\frac{e_\alpha}{m_\alpha\mathcal{V}^0(t)}\log(t)\widetilde{\mathcal{C}}_{\mathcal{V}(t)}\right|\lesssim 1,
    \end{equation*}
    i.e. since $\mathcal{V}(t)=v$ and $\mathcal{X}(t)=x+\log(t)\frac{e_\alpha}{m_\alpha v^0}\widetilde{\mathcal{C}}_v$ we have
    \begin{equation*}
        |x|\lesssim 1.
    \end{equation*}

\end{proof}

\begin{corollary}
    The functions $h_\alpha$ and $\widetilde{h}_\alpha$ are compactly supported. That is $\exists M>0$, such that $\forall t>0$, 
    \begin{equation*}
        \supp h_\alpha(t,\cdot) \subset \{ (x,v)\in\R^3_x\times\R^3_v,\, |x|\leq M,\, |v|\leq M\},
    \end{equation*}
    \begin{equation*}
        \supp \widetilde{h}_\alpha(t,\cdot) \subset \{ (x,v)\in\R^3_x\times\R^3_v,\, |x|\leq M,\, |v|\leq M\}.
    \end{equation*}
\end{corollary}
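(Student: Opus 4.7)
The plan is to deduce both bounds directly from Proposition~\ref{proposition_h_support_compact} via the change of variable relating $h_\alpha$ to $h^\alpha$. For $h_\alpha$, I would first verify the pointwise identity $h_\alpha(t,x,v) = h^\alpha(t,x,v/m_\alpha)$, which follows from $v_\alpha = v/m_\alpha$, the identity $v^0_\alpha = m_\alpha\langle v_\alpha\rangle$, and the definition $f_\alpha(t,x,v) = f^\alpha(t,x,v/m_\alpha)$. Proposition~\ref{proposition_h_support_compact} then immediately yields, for $t\geq \mathcal{T}$, that $h_\alpha(t,x,v)\neq 0$ forces $|x|\leq C$ and $|v|\leq m_\alpha C$. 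For $t$ in a compact range $[t_0,\mathcal{T}]$ with $t_0>0$ fixed, the support of $f_\alpha(t,\cdot)$ lies in a fixed ball by Lemma~\ref{lemme_support_f^alpha_en_x}, and the shift $t\widehat{v_\alpha}+\frac{e_\alpha}{v^0_\alpha}\log(t)\widetilde{\mathcal{C}}_{v_\alpha}$ is a continuous, hence bounded, function of $t$ on this interval (using $|v|\leq \beta$ to bound $\widetilde{\mathcal{C}}_{v_\alpha}$).

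For $\widetilde{h}_\alpha$, I would reuse the identity
\[
\widetilde{h}_\alpha(t,x,v) \;=\; g_\alpha\!\left(T_{t,v},\,x+\tfrac{e_\alpha}{v^0_\alpha}\log(t)\widetilde{\mathcal{C}}_{v_\alpha},\,v\right), \qquad T_{t,v} := tv^0_\alpha + \tfrac{e_\alpha}{v^0_\alpha}\log(t)\mathcal{D}_{v_\alpha},
\]
which is exactly the second line of the computation preceding this corollary. Converting $g_\alpha$ back into $h_\alpha$ by subtracting the logarithmic shift appropriate to time $T_{t,v}$ yields
\[
\widetilde{h}_\alpha(t,x,v) \;=\; h_\alpha\!\left(T_{t,v},\,x-\tfrac{e_\alpha}{v^0_\alpha}\log(T_{t,v}/t)\widetilde{\mathcal{C}}_{v_\alpha},\,v\right).
\]
On the support of $\widetilde{h}_\alpha$ one has $|v|\leq\beta$, so $v^0_\alpha$ and $\widetilde{\mathcal{C}}_{v_\alpha}$ are uniformly bounded on that velocity range by continuity, and $\log(T_{t,v}/t) = \log(v^0_\alpha) + O(\log(t)/t)$ stays uniformly bounded for $t$ large enough. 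Combined with the bound on the support of $h_\alpha$ proved above, this transfers directly to a uniform bound on the support of $\widetilde{h}_\alpha$.

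The main obstacle is purely algebraic bookkeeping: one has to carry out the identity $T_{t,v}\widehat{v_\alpha} = tv + \frac{e_\alpha}{(v^0_\alpha)^2}\log(t)\mathcal{D}_{v_\alpha}\,v$, observe that the $tv$ part exactly cancels the linear shift appearing in the definition of $\widetilde{h}_\alpha$, and exploit $\widetilde{\mathcal{C}}_{v_\alpha} = \mathcal{C}_{v_\alpha} - \widehat{v_\alpha}\mathcal{D}_{v_\alpha}$ together with $v/v^0_\alpha = \widehat{v_\alpha}$ to assemble the logarithmic correction into the single expression above. No new analytic input is required beyond Proposition~\ref{proposition_h_support_compact} for large $t$ and Lemma~\ref{lemme_support_f^alpha_en_x} together with continuity of the shift on a compact subinterval.
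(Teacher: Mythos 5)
Your proposal is correct and follows essentially the same route as the paper: the bound for $h_\alpha$ is read off from Proposition~\ref{proposition_h_support_compact} via $h_\alpha(t,x,v)=h^\alpha(t,x,v_\alpha)$, and $\widetilde{h}_\alpha$ is rewritten as $h_\alpha$ evaluated at the shifted time $s=tv^0_\alpha+\frac{e_\alpha}{v^0_\alpha}\log(t)\mathcal{D}_{v_\alpha}$ with the spatial shift $\frac{e_\alpha}{v^0_\alpha}(\log(t)-\log(s))\widetilde{\mathcal{C}}_{v_\alpha}$, whose uniform boundedness (since $\log(s/t)\to\log(v^0_\alpha)$ uniformly on $|v|\leq\beta$) transfers the support bound. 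No substantive difference from the paper's argument.
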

\begin{proof}
    Since $h_\alpha(t,x,v)=h^\alpha(t,x,v_\alpha)$, the first result is immediate. Now, notice that $\widetilde{h}_\alpha(t,x,v)=h_\alpha(s,y,v)$ with 
    \begin{equation*}
        s=tv^0_\alpha+\log(t)\frac{e_\alpha}{v^0_\alpha}\mathcal{D}_{v_\alpha},\quad y=x+\frac{e_\alpha}{v^0_\alpha}(\log(t)-\log(s))\widetilde{\mathcal{C}}_{v_\alpha}.
    \end{equation*}
    Assume $\widetilde{h}_\alpha(t,x,v)\neq 0$, then, by Proposition \ref{proposition_h_support_compact}, $|y|\leq C$ and $|x|\lesssim C +|\log(t)-\log(s)|$. Moreover, \\$|\log(t)-\log(s)|\xrightarrow[t\rightarrow +\infty]{}\log(v^0_\alpha)$ so $t\mapsto|\log(t)-\log(s)|$ is bounded, uniformly in $v$ on the support of $f^\alpha$, by $\kappa$. Then $|x|\lesssim C+\kappa$.
\end{proof}

\begin{corollary}
    The functions ${f^\alpha}_\infty, f_{\alpha\infty}$ and $\widetilde{f}_{\alpha\infty}$ are all compactly supported.
\end{corollary}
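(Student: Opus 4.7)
The plan is to transfer the uniform-in-time compact support statements for $h^\alpha$, $h_\alpha$ and $\widetilde{h}_\alpha$ (Proposition \ref{proposition_h_support_compact} and its corollary) to their pointwise limits, using the quantitative convergence estimates already established.

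First I would treat ${f^\alpha}_\infty$. We have the uniform bound
\begin{equation*}
    \left|h^\alpha(t,x,v)-{f^\alpha}_\infty(x,v)\right|\lesssim \frac{\log^6(2+t)}{2+t}
\end{equation*}
and, by Proposition \ref{proposition_h_support_compact}, there exist $\mathcal{T}>0$ and $C>0$ such that $\supp h^\alpha(t,\cdot)\subset \{|x|\leq C,\,|v|\leq C\}$ for every $t\geq\mathcal{T}$. Fix $(x,v)$ with $|x|>C$ or $|v|>C$: then $h^\alpha(t,x,v)=0$ for every $t\geq \mathcal{T}$, and sending $t\to +\infty$ in the above estimate gives ${f^\alpha}_\infty(x,v)=0$. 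Thus $\supp {f^\alpha}_\infty \subset \{|x|\leq C,\,|v|\leq C\}$.

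Next I would handle $f_{\alpha\infty}$. By construction, $f_{\alpha\infty}(x,v)={f^\alpha}_\infty(x,v_\alpha) = {f^\alpha}_\infty(x,v/m_\alpha)$, so the support of $f_{\alpha\infty}$ is the image of $\supp {f^\alpha}_\infty$ under $(x,v)\mapsto (x,m_\alpha v)$; in particular it is contained in $\{|x|\leq C,\,|v|\leq m_\alpha C\}$. Alternatively, one may invoke the corollary of Proposition \ref{proposition_h_support_compact}, which gives directly a uniform-in-$t$ bound on $\supp h_\alpha(t,\cdot)$, and combine it with the convergence $h_\alpha(tv^0_\alpha,x,v)\to f_{\alpha\infty}(x,v)$ exactly as above.

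Finally, for $\widetilde{f}_{\alpha\infty}$ I would use its explicit definition
\begin{equation*}
    \widetilde{f}_{\alpha\infty}(x,v)=f_{\alpha\infty}\left(x-\frac{e_\alpha}{v^0_\alpha}\log(v^0_\alpha)\widetilde{\mathcal{C}}_{v_\alpha},v\right).
\end{equation*}
Suppose $\widetilde{f}_{\alpha\infty}(x,v)\neq 0$. Then, by the compactness of $\supp f_{\alpha\infty}$, one has $|v|\leq M$ for some $M>0$ and
\begin{equation*}
    \left|x-\frac{e_\alpha}{v^0_\alpha}\log(v^0_\alpha)\widetilde{\mathcal{C}}_{v_\alpha}\right|\leq M.
\end{equation*}
Since $|v|\leq M$ and $\widetilde{\mathcal{C}}_w = \mathcal{C}_w-\widehat{w}\mathcal{D}_w = -\mathbb{L}(w)+\widehat{w}(\widehat{w}\cdot\mathbb{L}(w))$ is a continuous function of $w$, the corrective shift is uniformly bounded on the compact set $\{|v|\leq M\}$, and hence $|x|$ is bounded. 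The only potentially delicate point is the continuity of $\widetilde{\mathcal{C}}_{v_\alpha}$ in $v$, which follows from the continuity of $\E,\mathbb{B}\in C^0(\R^3_v)$ already proved; alternatively, this compactness statement is nothing but a restatement of the uniform support bound on $\widetilde{h}_\alpha(t,\cdot)$ from the preceding corollary, combined with the pointwise convergence $\widetilde{h}_\alpha(t,x,v)\to \widetilde{f}_{\alpha\infty}(x,v)$ derived at the end of the modified scattering proof, so no genuine obstacle arises.
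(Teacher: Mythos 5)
Your argument is correct and is exactly the one the paper leaves implicit: the uniform-in-$t$ support bounds on $h^\alpha$, $h_\alpha$, $\widetilde{h}_\alpha$ from Proposition \ref{proposition_h_support_compact} and its corollary, combined with the pointwise convergence estimates, force the limits ${f^\alpha}_\infty$, $f_{\alpha\infty}$, $\widetilde{f}_{\alpha\infty}$ to vanish outside a fixed compact set. Your alternative derivations (via the change of variables $v\mapsto m_\alpha v$ and via the explicit, continuous corrective shift in the definition of $\widetilde{f}_{\alpha\infty}$) are also valid and fully consistent with the paper's construction.
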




\printbibliography

\end{document}